\theoremstyle{plain}
\newtheorem{theorem}{Theorem}          
\newtheorem{lemma}[theorem]{Lemma}         
\newtheorem*{lemma*}{Lemma}         
\newtheorem{propn}[theorem]{Proposition}         
\newtheorem{cor}[theorem]{Corollary} 
\theoremstyle{remark}                      
\newtheorem{remark}[theorem]{Remark}
\newtheorem{example}[theorem]{Example}
\newcommand\sm{\setminus}
\newcommand\col{\colon}
\newcommand\sub{\subseteq}
\newcommand{\set}[2]{\{\,{\textstyle#1}:\,{\textstyle #2}\,\}}
\newcommand\conj{\overline}
\newcommand\lone{L^1}
\newcommand\ltwo{L^2}
\newcommand\linfty{L^\infty}
\newcommand{\vn}{\mathit{VN}}
\newcommand{\A}{A}              
\newcommand\B{\mathcal{B}}
\newcommand\freegrp{\mathbb{F}}
\newcommand\dom{D}
\newcommand{\G}{\mathbb{G}}
\newcommand\N{\mathcal{N}}
\newcommand\nat{\mathbb{N}}
\newcommand{\complex}{\mathbb{C}}          
\newcommand{\cop}{\Delta}
\newcommand{\id}{\mathrm{id}}
\newcommand\ot{\otimes}
\newcommand\vnot{\mathop{\overline\otimes}}
\newcommand\dual[1]{\hat{#1}}
\newcommand\conv{\star}
\newcommand\wW{\mathcal{W}}
\newcommand\ww{W}
\newcommand{\mc}{\mathcal}
\newcommand{\ip}[2]{\langle{#1},{#2}\rangle}
\newcommand{\Bigip}[2]{\Bigl\langle{#1},{#2}\Bigr\rangle}
\newcommand\pp[2]{(#1\mid #2)}
\newcommand\bigpp[2]{\bigl(#1\bigm| #2\bigr)}
\newcommand\Bigpp[2]{\Bigl(#1\Bigm| #2\Bigr)}
\begin{document}

\title[Completely positive definite functions and Bochner's theorem]%
      {Completely positive definite functions and Bochner's theorem 
       for locally compact quantum groups}
\author{Matthew Daws and Pekka Salmi}

\address{School of Mathematics, University of Leeds,
Leeds LS2 9JT, United Kingdom}

\email{matt.daws@cantab.net}

\address{Department of Mathematical Sciences, University of Oulu,
PL~3000, FI-90014 Oulun yliopisto, Finland}

\email{pekka.salmi@iki.fi}

\begin{abstract}
We prove two versions of Bochner's theorem for locally compact
quantum groups.
First, every completely positive definite ``function'' 
on a locally compact quantum group $\G$ arises as a transform of 
a positive functional on the universal C*-algebra $C_0^u(\dual\G)$
of the dual quantum group.
Second, when $\G$ is coamenable, complete positive definiteness may be 
replaced with the weaker notion of positive definiteness, which 
models the classical notion. A counterexample is given to show 
that the latter result is not true in general. 
To prove these results, we show two auxiliary results of independent
interest: products are linearly dense in $\lone_\sharp(\G)$,
and when $\G$ is coamenable, the Banach $*$-algebra 
$\lone_\sharp(\G)$ has a contractive bounded approximate identity.

\vskip1ex
\noindent\emph{Keywords:} Quantum group, positive definite function,
Bochner's Therorem. 

\vskip1ex
\noindent{2010 \emph{Mathematics Subject Classification.}}
Primary: 20G42, 43A35, Secondary: 22D25, 43A30, 46L89.
\end{abstract}

\maketitle

\section{Introduction}

Bochner's theorem (as generalised by Weil) tells us that any positive definite
function on a locally compact abelian group $G$ is the Fourier--Stieltjes
transform of a positive measure on the dual group $\dual G$.  In non-abelian
harmonic analysis, we can replace the algebra $C_0(\dual G)$ by the group
C$^*$-algebra $C^*(G)$, and hence replace positive measures on $\dual G$ by
positive functionals on $C^*(G)$.  Viewing $C^*(G)^*$ as $B(G)$, the
Fourier--Stieltjes algebra, Bochner's theorem essentially
says that positive definitive functions are precisely the positive elements of
$B(G)$ (this viewpoint is taken in \cite[D\'efinition~2.2]{eymard}).

For a locally compact quantum group $\G$, we replace functions on groups by
elements of von Neumann (or C$^*$-) algebras, which come equipped with extra
structure reminiscent of an algebra which really arises from a group.  Given
$\G$, we can form the universal dual algebra $C_0^u(\dual\G)$, which generalises
the passage from $G$ to the full group C$^*$-algebra $C^*(G)$ (see the next
section for further details on $C_0^u(\dual\G)$ and so forth).  Letting
$\wW\in M(C_0(\G)\otimes C_0^u(\dual\G))$ be the maximal unitary
corepresentation of $\G$, we have an algebra homomorphism $C_0^u(\dual\G)^*
\rightarrow M(C_0(\G))\subseteq L^\infty(\G); \dual\mu\mapsto
(\id\otimes\dual\mu)(\wW^*)$.
In the commutative case, the image is precisely the Fourier--Stieltjes algebra
(we remark that it is slightly a matter of convention if one uses $\wW$ or
$\wW^*$ here).  Motivated by this, there are perhaps two obvious notions
for what a ``positive definite'' element of $L^\infty(\G)$ should be; here
we introduce some of our own terminology:
\begin{enumerate}[(1)]
\item\label{defn:pdf} 
  A \emph{positive definite function} is
  $x\in \linfty(\G)$ with $\ip{x^*}{\omega \conv \omega^\sharp}\geq 0$
  for all $\omega\in \lone_\sharp(\G)$.
\item\label{defn:fsp} 
  A \emph{Fourier--Stieltjes transform of a positive measure}
  is $x\in \linfty(\G)$ such that there exists $\hat\mu\in C_0^u(\dual\G)^*_+$
  with $x = (\id\otimes\hat\mu)(\wW^*)$.
\end{enumerate}
It seems that definition (\ref{defn:fsp}) is a better fit with 
the current literature, although the term  ``positive definite
function'' is not commonly used in this context
(for examples where positive functionals in
$\hat\mu\in C_0^u(\dual\G)^*_+$, or their transforms, are 
used in place of positive definite functions (from the classical case),
see \cite{kalantar-neufang-ruan} which studies Markov operators,
\cite[Section~4]{brannan:quan_auts} and \cite{brannan:approx}
which study various approximation properties for von Neumann algebras over
quantum groups, or \cite{kyed:prop-T} and \cite{kyed-soltan} 
which study property~$(T)$ for quantum groups; the latter reference
actually uses the term ``positive definite function'' in an offhand way).
Definition (\ref{defn:pdf}) is the most natural as it directly generalises
the notion of a positive definite function on $L^1(G)$, see for example
\cite[Theorem~13.4.5]{dixmier:C*-algebras}.
This definition is rather briefly studied for Kac algebras in 
\cite[Section~1.3]{enock-schwartz:kac}; however, it is mainly
(\ref{defn:fsp}), in various guises, which is 
used in \cite{enock-schwartz:kac}.  Indeed, we show in
Example~\ref{ex:not-cp} below that even in the cocommutative case,
definition (\ref{defn:pdf}) is problematic without some sort of
amenability assumption -- to be precise, that $\G$ is coamenable.
Even when $\G$ is coamenable, we are required to deal with the unbounded
antipode $S$, and our techniques are necessarily different from those
used for Kac algebras.  We remark that it is easy to see that always
(\ref{defn:fsp})$\implies$(\ref{defn:pdf}), see
Lemma~\ref{lem:fsp_implies_pdf} below.

Definition (\ref{defn:fsp}) applied in the cocommutative case suggests that
a ``positive definite'' element of $\mathit{VN}(G)$ should come from a
positive measure in $M(G) = C_0(G)^* = \mathit{ML}^1(G)$, the
multiplier algebra of $L^1(G)$. On the dual side, De Canni\`ere and
Haagerup showed in  
\cite{de-canniere-haagerup:multipliers} that \emph{completely
positive} multipliers of $A(G)$ coincide with positive definite functions on
$G$.  This suggests the following notions:
\begin{enumerate}[(1)]\setcounter{enumi}{2}
\item\label{defn:cpm} 
  A \emph{completely positive multiplier}
  is $x\in \linfty(\G)$ such that there exists 
  a completely positive left multiplier $L_x\col
  \lone(\dual\G)\to\lone(\dual\G)$ 
  with $x \hat\lambda(\hat\omega) = \hat\lambda(L_x(\hat\omega))$
  for every $\hat\omega\in \lone(\dual\G)$. 
  Here $\hat\lambda$ denotes 
  the map $\hat\omega\mapsto (\id\ot\hat\omega)(W^*)
  = (\hat\omega\ot\id)(\hat W)$
  where $W\in M(C_0(\G)\ot C_0(\dual\G))$ is 
  the left multiplicative unitary of $\G$. 
\item\label{defn:cpdf} 
  A \emph{completely positive definite function}
  is $x\in \linfty(\G)$ such that there exists a normal 
  completely positive map 
  $\Phi\col\B(\ltwo(\G)) \to \B(\ltwo(\G))$ with
  \[ 
  \ip{x^*}{\omega_{\xi, \alpha} \conv \omega_{\eta, \beta}^{\sharp}}
  = \pp{ \Phi(\theta_{\xi,\eta})\beta}{ \alpha }
  \]
  for every $\xi,\eta\in\dom(P^{1/2})$ and 
  $\alpha,\beta\in\dom(P^{-1/2})$. Here $P$ is a
  densely defined, positive, injective operator on $\ltwo(\G)$
  implementing the scaling group of $\G$. 
\end{enumerate}
The first named author showed in \cite{daws:cp-multipliers} that
(\ref{defn:cpm}) and (\ref{defn:fsp}) are equivalent notions, and that they
imply (\ref{defn:cpdf}).  We note that while (\ref{defn:fsp}) is the notion
mostly adopted in the literature (see above), it is actually the map $L_x$
(or its adjoint) which is of interest (the point being that the implication
(\ref{defn:fsp})$\implies$(\ref{defn:cpm}) is very easy to establish).
Let us motivate (\ref{defn:cpdf}) a little more.
The unbounded involution~$\sharp$ on $L^1(\G)$ is given by $\omega^\sharp
= \omega^* \circ S$, where this is bounded.  Normal completely positive maps
on $\mc B(L^2(\G))$ biject with the positive part of the extended (or
weak$^*$) Haagerup tensor product 
$\mc B(L^2(\G)) \overset{eh}{\otimes} \mc B(L^2(\G))$
(see \cite{blecher-smith,effros-ruan}), where $a\otimes b$ is
associated to $\Phi$ with 
\[ \Phi(\theta) = a\theta b \quad\Leftrightarrow\quad
\pp{ \Phi(\theta_{\xi,\eta})\beta}{ \alpha } = \ip{a}{\omega_{\xi,\alpha}}
\ip{b}{\omega_{\beta,\eta}}. \]
Hence (\ref{defn:cpdf}) is equivalent to the existence of a positive
$u\in \mc B(L^2(\G)) \overset{eh}{\otimes} \mc B(L^2(\G))$ with
\[ \ip{\Delta(x^*)}{\omega_1 \otimes \omega_2^*\circ S}
= \ip{u}{\omega_1\otimes\omega_2^*}. \]
Hence, informally, this is equivalent to $(\id\otimes S)\Delta(x^*)$ being
a positive member of $\mc B(L^2(\G)) \overset{eh}{\otimes}\mc B(L^2(\G))$.
In the commutative case, $x^*=F\in L^\infty(G)$ say, and this says that the
function $(s,t) \mapsto F(st^{-1})$ is, in some sense, a ``positive kernel'',
i.e. that $F$ is positive definite.
So (\ref{defn:cpdf}) says that $x^*$ defines a non-commutative,
positive definite kernel.

We remark that as the ``inverse'' operator on $\G$
(the antipode $S$) and the adjoint~$*$ on $L^\infty(\G)$ do not commute,
we have to be a little careful about using $x$ or $x^*$ in the above
definitions.

The principle results of this paper are:
\begin{itemize}
\item For any $\G$, we have that (\ref{defn:cpdf}) is equivalent to
(\ref{defn:cpm}) and hence equivalent to (\ref{defn:fsp}).
\item When $\G$ is coamenable (as is true in the commutative case!) all
four conditions are equivalent.
\end{itemize}
Both these results may be interpreted as versions of Bochner's 
theorem for locally compact quantum groups.
When given a condition like (\ref{defn:pdf}) the obvious thing to try is
a GNS construction, in this case applied to the $*$-algebra $L^1_\sharp(\G)$.
In general, this algebra does not have an approximate identity, so
we first show in Section~\ref{sec:density} that products are always linearly
dense in $L^1_\sharp(\G)$, which enables a suitable GNS construction.
In Section~\ref{sec:cpd} we apply this, together with techniques similar to
those used in \cite{daws:cp-multipliers} to show that 
(\ref{defn:cpdf})$\implies$(\ref{defn:cpm}).  One can take as a definition
that $\G$ is coamenable if and only if $L^1(\G)$ has a bounded approximate
identity.  In Section~\ref{sec:bai_coamen_case} we show that in this case,
also $L^1_\sharp(\G)$ has a (contractive) approximate identity.  Indeed,
we prove a slightly more general statement, adapting ideas of J.~Kustermans,
A.~Van~Daele and J.~Verding from \cite{kustermans:one-parameter} (we wish to
approximate the counit $\epsilon$, which is \emph{invariant} for the scaling
group, and it is this invariance which is key; the argument in
\cite{kustermans:one-parameter} is for multiplier algebras of C$^*$-algebras
and modular automorphism groups, and to our mind, works because the unit of
$M(A)$ is invariant for the modular automorphism group).  We suspect that the
ideas of Sections~\ref{sec:density} and~\ref{sec:bai_coamen_case} will prove
to be useful in other contexts.
In Section~\ref{sec:bochner} we apply this to condition (\ref{defn:pdf}).
In the final section we consider $n$-positive multipliers.

\section{Preliminaries}

Throughout the paper $\G$ denotes a locally compact quantum group
\cite{kustermans:LCQG-chapter, kustermans-vaes:LCQG_VN, kustermans-vaes:LCQG}.
Its comultiplication $\cop$ is implemented by the 
left multiplicative unitary $W\in\B(\ltwo(\G)\ot\ltwo(\G))$:
\[
\cop(x) = W^*(1\ot x)W \qquad(x\in\linfty(\G)).
\]
The reduced C*-algebra $C_0(\G)$ is the norm closure 
of 
\[
\set{(\id\ot\omega)W}{\omega\in\B(\ltwo(\G))_*}.
\]
On the other hand, the norm closure of 
\[
\set{(\omega\ot\id)W}{\omega\in\B(\ltwo(\G))_*}
\]
gives the reduced C*-algebra $C_0(\dual\G)$ 
of the dual quantum group $\dual\G$. 
The left multiplicative unitary of the dual quantum group 
is just $\hat W =\sigma W^*\sigma$ where $\sigma$ is the flip map
on $\ltwo(\G)\ot\ltwo(\G)$. 
The associated von Neumann algebras $\linfty(\G)$ 
and $\linfty(\dual\G)$ are the weak$^*$-closures of 
the respective C*-algebras  $C_0(\G)$ and $C_0(\dual\G)$.
The predual $\lone(\G)$ of $\linfty(\G)$ is a Banach algebra
under the convolution product $\omega\conv\tau = (\omega\ot\tau)\cop$.
Given $\xi,\eta\in L^2(\G)$, let $\omega_{\xi,\eta} \in L^1(\G)$ be the
normal functional $x\mapsto (x\xi\mid\eta)$.  As $L^\infty(\G)$ is in standard
position on $L^2(\G)$, every member of $L^1(\G)$ arises in this way.

The scaling group $(\tau_t)$ of $\G$ 
is implemented by a  positive, injective, densely defined 
operator $P$ on $\ltwo(\G)$: we have $\tau_t(x) = P^{it} x P^{-it}$. 
Then the antipode $S$ of $\G$ has a polar decomposition
$S = R\tau_{-i/2}$, where $R$ is the unitary antipode.
In particular $\cop R = (R\ot R)\sigma\cop$, 
where $\sigma$ denotes the flip map, now on $\linfty(\G)\vnot\linfty(\G)$.

We follow \cite[Section~3]{kustermans:universal} to define the $*$-algebra
$\lone_\sharp(\G)$.  Recall that $\omega\in \lone(\G)$ is a member of
$\lone_\sharp(\G)$ if and only if there exists $\omega^\sharp\in \lone(\G)$
such that
\[ \ip{x}{\omega^\sharp} = \overline{ \ip{S(x)^*}{\omega} }
\qquad (x\in \dom(S)) \]
where $\dom(S)$ denotes the domain of $S$.
Then $\lone_\sharp(\G)$ is a dense subalgebra of $\lone(\G)$.
The natural norm of $\lone_\sharp(\G)$ is $\|\omega\|_\sharp =
\max( \|\omega\|, \|\omega^\sharp\| )$, and with $\sharp$ as the 
involution, $\lone_\sharp(\G)$ is a Banach $*$-algebra.
For $\omega\in \lone(\G)$, let $\omega^*\in \lone(\G)$ be the functional
$\ip{x}{\omega^*} = \overline{ \ip{x^*}{\omega} }$.  Thus
$\omega\in \lone_\sharp(G)$ if and only if $w^* \circ S$ is bounded.  Notice
that as $\Delta$ is a $*$-homomorphism, the map $\omega\mapsto\omega^*$ is
an anti-linear homomorphism on $\lone(\G)$, while $\omega\mapsto\omega^\sharp$
is an anti-linear anti-homomorphism on $\lone_\sharp(\G)$.

The universal C*-algebra $C_0^u(\dual\G)$ associated to $\dual\G$ is 
the universal C*-com\-ple\-tion of $\lone_\sharp(\G)$ 
(see \cite{kustermans:universal} for details). 
The natural map (i.e.\ the universal representation) 
$\lambda_u\col \lone_\sharp(\G)\to  C_0^u(\dual\G)$ 
is implemented as $\lambda_u(\omega) = (\omega \ot\id)(\wW)$
where $\wW\in M(C_0(\G) \otimes  C_0^u(\dual\G))$ 
is the maximal unitary corepresentation of $\G$
(which is denoted by $\dual{\mc V}$ in
\cite[Proposition~4.2]{kustermans:universal}).

\begin{lemma}\label{lem:fsp_implies_pdf}
Let $x=(\id\otimes\hat\mu)(\wW^*)$ for some $\hat\mu\in C_0^u(\dual\G)^*_+$.
Then $\ip{x^*}{\omega\star\omega^\sharp}\geq 0$ for every
$\omega\in\lone_\sharp(\G)$.
\end{lemma}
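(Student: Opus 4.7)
The plan is to exploit the fact that the universal representation $\lambda_u\col \lone_\sharp(\G)\to C_0^u(\dual\G)$, $\omega\mapsto(\omega\otimes\id)(\wW)$, is a $*$-homomorphism, so that $\omega\conv\omega^\sharp$ is sent to a positive element, against which $\hat\mu$ pairs non-negatively.

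First I would rewrite $x^*$. Since $\hat\mu$ is positive it is Hermitian, and so the slice map $\id\otimes\hat\mu$ commutes with the adjoint on $M(C_0(\G)\otimes C_0^u(\dual\G))$, giving
\[
x^* = \bigl((\id\otimes\hat\mu)(\wW^*)\bigr)^* = (\id\otimes\hat\mu)(\wW).
\]
Next, by the standard Fubini-type identity for slice maps (after extending $\hat\mu$ strictly to the multiplier algebra), I would reorganise the pairing as
\[
\ip{x^*}{\omega\conv\omega^\sharp}
= \bigl((\omega\conv\omega^\sharp)\otimes\hat\mu\bigr)(\wW)
= \hat\mu\bigl(\lambda_u(\omega\conv\omega^\sharp)\bigr).
\]

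Finally, I would invoke the two defining properties of $\lambda_u$ on $\lone_\sharp(\G)$: multiplicativity $\lambda_u(\omega_1\conv\omega_2)=\lambda_u(\omega_1)\lambda_u(\omega_2)$, which follows from the corepresentation identity $(\cop\otimes\id)(\wW)=\wW_{13}\wW_{23}$; and the $*$-compatibility $\lambda_u(\omega^\sharp)=\lambda_u(\omega)^*$, which is built into Kustermans' construction of $C_0^u(\dual\G)$ as the universal enveloping C*-algebra of $\lone_\sharp(\G)$. These combine to yield
\[
\hat\mu\bigl(\lambda_u(\omega\conv\omega^\sharp)\bigr)
= \hat\mu\bigl(\lambda_u(\omega)\lambda_u(\omega)^*\bigr)\geq 0,
\]
where the final inequality is just positivity of $\hat\mu$.

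The argument is short and essentially a bookkeeping exercise. The only mildly technical step is justifying that slicing against $\hat\mu$ commutes with $*$ when applied to the multiplier $\wW^*$, but this is immediate from the Hermitian property of $\hat\mu$. No deeper obstacle arises; in particular, no appeal to the analytic subtleties of $S$ or $P$ is needed, since those are already absorbed into the definitions of $\sharp$ and of $\lambda_u$ as a $*$-representation.
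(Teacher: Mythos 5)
Your proposal is correct and follows essentially the same route as the paper: both reduce the pairing to $\ip{\hat\mu}{\lambda_u(\omega)\lambda_u(\omega)^*}$ via the corepresentation identity $(\cop\otimes\id)(\wW)=\wW_{13}\wW_{23}$ (equivalently, multiplicativity of $\lambda_u$) together with $\lambda_u(\omega^\sharp)=\lambda_u(\omega)^*$, and then invoke positivity of $\hat\mu$. The only cosmetic difference is that you make explicit the intermediate step $x^*=(\id\otimes\hat\mu)(\wW)$, which the paper absorbs into its first equality.
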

\begin{proof}
Simply note that
\begin{align*} \ip{x^*}{\omega\star\omega^\sharp} &=
\ip{(\Delta\otimes\id)\wW}{\omega\otimes\omega^\sharp\otimes\hat\mu}
= \ip{\wW_{13} \wW_{23}}{\omega\otimes\omega^\sharp\otimes\hat\mu} \\
&= \ip{\hat\mu}{\lambda_u(\omega) \lambda_u(\omega^\sharp)}
= \ip{\hat\mu}{\lambda_u(\omega) \lambda_u(\omega)^*} \geq 0, \end{align*}
as required.
\end{proof}

Similarly to \cite{kraus-ruan:multipliers-of-kac},
we say that $x\in  M(C_0(\G))$ is a 
\emph{left multiplier of $\lone(\dual\G)$} if 
\[
x \hat\lambda(\hat\omega) \in \hat\lambda\bigl(\lone(\dual\G)\bigr)
\quad\text{whenever}\quad \hat\omega\in \lone(\dual\G),
\]
where
\[
\hat\lambda \col  C_0(\dual{\G})^* \to  M(C_0(\G)),
\qquad \hat\lambda(\hat\mu)
= (\hat\mu\otimes\id)(\hat\ww) = (\id \otimes \hat\mu)(\ww^*).
\]
In this case we can define $L_x\col \lone(\dual\G)\to \lone(\dual\G)$
by
\[
\hat\lambda(L_x(\hat\omega)) = x \hat\lambda(\hat\omega)
\]
because $\hat\lambda$ is injective. 
We see immediately that $L_x$ is a left multiplier (often termed a
``left centraliser'' in the literature) in the usual sense, 
that is, $L_x(\hat\omega\conv\hat\tau) = L_x(\hat\omega)\conv \hat\tau$ 
for every $\hat\omega,\hat\tau\in\lone(\dual\G)$.
The following lemma is shown for Kac algebras in
\cite{kraus-ruan:multipliers-of-kac}, but 
since we need the (short) argument once more, we include a proof.

\begin{lemma} \label{lem:L_x-bounded}
Let $x\in  M(C_0(\G))$ be a left multiplier of $\lone(\dual\G)$.
Then $L_x\col\lone(\dual \G)\to \lone(\dual \G)$ is bounded.
\end{lemma}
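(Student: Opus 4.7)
The plan is to apply the closed graph theorem to $L_x\col\lone(\dual\G)\to\lone(\dual\G)$. Since both source and target are Banach spaces, it suffices to show the graph is closed.

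The key auxiliary observation is that the restriction of $\hat\lambda$ to $\lone(\dual\G)$ is a bounded linear map into $M(C_0(\G))$. Indeed, from the formula $\hat\lambda(\hat\omega) = (\id\ot\hat\omega)(\ww^*)$ and the fact that $\ww$ is a unitary, we get $\|\hat\lambda(\hat\omega)\|\leq \|\hat\omega\|$. Moreover $\hat\lambda$ is injective on $\lone(\dual\G)$, as noted in the paragraph defining $L_x$ (this follows from the standard fact that $\{(\id\ot\hat\omega)(\ww^*) : \hat\omega\in \lone(\dual\G)\}$ determines $\hat\omega$ via density of suitable slices).

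With these two properties in hand, suppose $\hat\omega_n\to\hat\omega$ in $\lone(\dual\G)$ and $L_x(\hat\omega_n)\to\hat\tau$ in $\lone(\dual\G)$. Applying the bounded map $\hat\lambda$, we get $\hat\lambda(\hat\omega_n)\to\hat\lambda(\hat\omega)$ and $\hat\lambda(L_x(\hat\omega_n))\to\hat\lambda(\hat\tau)$ in the norm of $M(C_0(\G))$. But by the defining relation of $L_x$,
\[
\hat\lambda\bigl(L_x(\hat\omega_n)\bigr) = x\,\hat\lambda(\hat\omega_n) \longrightarrow x\,\hat\lambda(\hat\omega) = \hat\lambda\bigl(L_x(\hat\omega)\bigr),
\]
using joint norm-continuity of multiplication in the C*-algebra $M(C_0(\G))$. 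Comparing the two limits and invoking injectivity of $\hat\lambda$ on $\lone(\dual\G)$ yields $\hat\tau = L_x(\hat\omega)$, so the graph is closed.

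There is no real obstacle; the argument is essentially the standard closed graph trick used to establish automatic boundedness of multipliers. The only point requiring care is to record why $\hat\lambda$ is both bounded and injective on $\lone(\dual\G)$, both of which are immediate from the setup already established in the preliminaries.
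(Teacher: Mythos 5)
Your argument is correct and is essentially the paper's own proof: both apply the closed graph theorem, use the contractivity of $\hat\lambda$ and boundedness of multiplication by the fixed element $x$ to see that $\hat\lambda(L_x(\hat\omega_n))$ converges to both $x\hat\lambda(\hat\omega)=\hat\lambda(L_x(\hat\omega))$ and $\hat\lambda(\hat\tau)$, and conclude by injectivity of $\hat\lambda$. No substantive difference.
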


\begin{proof}
We apply the closed graph theorem. 
Suppose that $\hat\omega_n\to \hat\omega$ and 
$L_x(\hat\omega_n)\to \hat\tau$ in $\lone(\dual\G)$. 
Then 
\begin{align*}
\|\hat\lambda(L_x(\hat\omega)) - \hat\lambda(\hat\tau)\|
&\le \|x\hat\lambda(\hat\omega)  - x\hat\lambda(\hat\omega_n)\|
+ \|\hat\lambda(L_x(\hat\omega_n)) -  \hat\lambda(\hat\tau)\|\\
&\le
\| x\| \|\hat\omega-\hat\omega_n\| + \|L_x(\hat\omega_n) -  \hat\tau\|
\to 0
\end{align*}
as $n\to \infty$. 
Since $\hat\lambda$ is injective, we have $L_x(\hat\omega) = \hat\tau$ 
and by the closed graph theorem, $L_x$ is bounded.
\end{proof}

We say that $x\in  M(C_0(\G))$ is an \emph{$n$-positive multiplier} if
it is a left multiplier of $\lone(\G)$ and the map
$L_x^*\col \linfty(\dual\G)\to\linfty(\dual\G)$ is $n$-positive.
We shall consider $n$-positive multipliers more carefully in 
Section~\ref{sec:n-pos}, but the main interest of the paper 
shall be the completely positive multipliers:
that is, $x\in  M(C_0(\G))$ that are $n$-positive multipliers
for every $n\in\nat$. 
When $x$ is a completely positive multiplier, 
$L_x^*$ extends to a normal, completely positive map 
$\Phi\col \B(\ltwo(\G))\to \B(\ltwo(\G))$
(see \cite[Proposition~4.3]{junge-neufang-ruan:mults} or
\cite[Proposition~3.3]{daws:cp-multipliers}).
Moreover, by \cite[Proposition~6.1]{daws:cp-multipliers},
\[ 
\ip{x^*}{\omega_{\xi, \alpha} \conv \omega_{\eta, \beta}^{\sharp}}
  = \pp{ \Phi(\theta_{\xi,\eta})\beta}{ \alpha }
\]
for every $\xi,\eta\in\dom(P^{1/2})$ and
$\alpha,\beta\in\dom(P^{-1/2})$, so $x$ is completely positive
definite. We shall prove the converse in Section~\ref{sec:cpd}, but 
first we need a bit of groundwork.

The following is similar to known results about cores for analytic
generators (compare \cite[Theorem~X.49]{reed-simon:vol2} for example)
but we give the short proof for completeness.
Let us just remark that as $S=R\tau_{-i/2}$ 
and $\tau_t(x) = P^{it} x P^{-it}$ for all~$t$,
the functional $\omega_{\xi,\alpha}$ is in $L^1_\sharp(\G)$ whenever
$\xi\in D(P^{1/2})$ and $\alpha\in D(P^{-1/2})$,
and in this case $\omega_{\xi,\alpha}^\sharp
= R_*(\omega_{P^{-1/2}\alpha,P^{1/2}\xi}) = \omega_{\hat JP^{1/2}\xi,\hat J
P^{-1/2}\alpha}$; see \cite[Section~6]{daws:cp-multipliers}.

\begin{lemma}\label{lem:sharp-core}
The set
\[
D = \set{\omega_{\xi,\alpha}}{\xi\in\dom(P^{1/2}),\alpha\in\dom(P^{-1/2})}
\]
is dense in $\lone_\sharp(\G)$ with respect to its natural norm
\textup{(}i.e.\ $D$ is a core for $\omega\mapsto\omega^\sharp$\textup{)}.
\end{lemma}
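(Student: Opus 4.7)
\emph{Plan.} The plan is to approximate $\omega\in\lone_\sharp(\G)$ by elements of $D$ using Gaussian smoothing with respect to the analytic generator $P$ of the scaling group, following the standard technique for establishing cores for analytic generators (cf.~\cite[Theorem~X.49]{reed-simon:vol2}).

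First, write $\omega=\omega_{\xi,\alpha}$ using the standard position of $\linfty(\G)$ on $\ltwo(\G)$. For each $n\ge 1$ define
\[
\xi_n := \sqrt{n/\pi}\int_\R e^{-nt^2}P^{it}\xi\,dt,\qquad
\alpha_n := \sqrt{n/\pi}\int_\R e^{-nt^2}P^{it}\alpha\,dt.
\]
By functional calculus, $\xi_n=g_n(P)\xi$ with $g_n(\lambda)=e^{-(\log\lambda)^2/(4n)}$, and similarly for $\alpha_n$. Since $g_n(P)$ maps $\ltwo(\G)$ into $\bigcap_{z\in\C}\dom(P^z)$, both $\xi_n$ and $\alpha_n$ belong to every $\dom(P^z)$, whence $\omega_n:=\omega_{\xi_n,\alpha_n}\in D$. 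As $g_n(P)\to I$ strongly, $\xi_n\to\xi$ and $\alpha_n\to\alpha$ in $\ltwo(\G)$, giving $\omega_n\to\omega$ in $\lone(\G)$.

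The crux of the proof is to verify that $\omega_n^\sharp\to\omega^\sharp$ in $\lone(\G)$. From the formula $\omega_n^\sharp=\omega_{\hat JP^{1/2}\xi_n,\hat JP^{-1/2}\alpha_n}$ recalled above, together with the commutation $\hat J P\hat J=P^{-1}$ (equivalently $\hat JP^{z}\hat J=P^{-\bar z}$), one gets $\omega_n^\sharp=\omega_{P^{-1/2}\hat J\xi_n,\,P^{1/2}\hat J\alpha_n}$, where the vectors are $P^{-1/2}\hat J\xi_n=h_n(P)\hat J\xi$ and $P^{1/2}\hat J\alpha_n=\tilde h_n(P)\hat J\alpha$ for $h_n(\lambda)=\lambda^{-1/2}g_n(\lambda)$ and $\tilde h_n(\lambda)=\lambda^{1/2}g_n(\lambda)$.

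The main obstacle is that the operator norms $\|h_n(P)\|$ and $\|\tilde h_n(P)\|$ diverge (both grow like $e^{n/4}$) as soon as $\omega\notin D$, so naive vector-norm estimates on $\omega_n^\sharp$ break down. Here the hypothesis $\omega\in\lone_\sharp(\G)$ must enter essentially. The key tool is the compatibility identity $(\omega\circ\tau_t)^\sharp=\omega^\sharp\circ\tau_t$, which follows because $\tau_t$ is a $*$-automorphism commuting with $S$; it guarantees that Gaussian smoothing intertwines $\sharp$ at the functional level. Matching this intertwining with the vector-level smoothing via the spectral resolution of $P$ and a Bochner-dominated-convergence argument should then yield the required $\lone$-convergence $\omega_n^\sharp\to\omega^\sharp$, and hence the density of $D$ in $\lone_\sharp(\G)$ with respect to the $\sharp$-norm.
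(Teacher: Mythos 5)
The first half of your argument is fine: smoothing the representing vectors with $g_n(P)$ does produce elements $\omega_n=\omega_{\xi_n,\alpha_n}$ of $D$ with $\omega_n\to\omega$ in $\lone(\G)$. But the second half, which is the entire content of the lemma, is not proved: the phrase ``should then yield'' covers exactly the step where all the difficulty lies, and the route you have chosen makes that step genuinely problematic. Membership of $\lone_\sharp(\G)$ is a property of the functional $\omega$, not of the representing vectors; nothing in the hypothesis forces $\hat J\xi\in\dom(P^{-1/2})$ or $\hat J\alpha\in\dom(P^{1/2})$, so the vectors $h_n(P)\hat J\xi$ and $\tilde h_n(P)\hat J\alpha$ may diverge in norm, and any convergence of $\omega_n^\sharp$ would have to come from cancellations between the two legs of $\omega_{u_n,v_n}$ that are invisible at the vector level. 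Moreover, the compatibility $(\omega\circ\tau_t)^\sharp=\omega^\sharp\circ\tau_t$ that you plan to exploit pertains to the \emph{diagonal} (functional-level) smear $\int e^{-r^2t^2}\,\omega\circ\tau_t\,dt=\int e^{-r^2t^2}\,\omega_{P^{-it}\xi,P^{-it}\alpha}\,dt$, whereas your $\omega_n=\omega_{g_n(P)\xi,g_n(P)\alpha}$ is a two-sided average of the functionals $\omega_{P^{-it}\xi,P^{-is}\alpha}$ over independent $s,t$; the intertwining identity does not apply to it, and indeed $\omega\mapsto\omega_n$ is not even well defined on $\lone(\G)$ (it depends on the choice of $\xi,\alpha$, since $g_n(P)\,\cdot\,g_n(P)$ does not preserve $\linfty(\G)$).

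The paper's proof smears the functional rather than the vectors, and that is what closes the argument. One first checks that $D$ is invariant under $(\tau_t)$ and that $t\mapsto\omega\circ\tau_t$ is continuous for $\|\cdot\|_\sharp$, so the smear $\omega(r)=\tfrac{r}{\sqrt\pi}\int e^{-r^2t^2}\,\omega\circ\tau_t\,dt$ of any $\omega\in D$ lies in the $\lone_\sharp(\G)$-closure of $D$. The decisive point, which has no analogue in your set-up, is the estimate $\|\nu(r)\|_\sharp\le e^{r^2/4}\|\nu\|$ valid for \emph{every} $\nu\in\lone(\G)$ (coming from $\nu(r)^\sharp=\nu^*(r,-i/2)\circ R$): the smear is a bounded map from $\lone(\G)$ into $\lone_\sharp(\G)$, so a plain $\lone$-approximation $\omega_n\to\omega$ with $\omega_n\in D$ upgrades to $\|\omega(r)-\omega_n(r)\|_\sharp\to 0$, placing $\omega(r)$ in the $\sharp$-closure of $D$ for each fixed $r$; finally $\omega(r)\to\omega$ and $\omega(r)^\sharp=\omega^\sharp(r)\to\omega^\sharp$ as $r\to\infty$, the hypothesis $\omega\in\lone_\sharp(\G)$ entering only at this last stage. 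To salvage your write-up, replace the vector-level smoothing in your ``crux'' step by this functional-level smear and supply the $e^{r^2/4}$ bound.
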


\begin{proof}
For $\omega\in\lone(\G)$, $r>0$, define
\[ 
\omega(r) = \frac{r}{\sqrt\pi} 
\int_{-\infty}^\infty e^{-r^2t^2} \omega\circ\tau_t \ dt. 
\]
See also Section~\ref{sec:bai_coamen_case} below.
Since the modular group $(\tau_t)$ is implemented by $P$, 
it follows that $D$ is invariant under $(\tau_t)$.  
On the other hand, since $R$ commutes with $(\tau_t)$
and $S = R\tau_{-i/2}$, we have 
$(\omega\circ\tau_t)^\sharp = \omega^\sharp\circ\tau_t$ 
for every $\omega\in\lone_\sharp(\G)$, and so
$t\mapsto \omega\circ\tau_t$ is continuous  
with respect to the norm of $\lone_\sharp(\G)$. 
Consequently, if $\omega\in D$, 
then $\omega(r)$ is in the $\lone_\sharp(\G)$-closure 
of $D$ for every $r>0$. 

Given $\omega\in \lone_\sharp(\G)$, there exists 
$(\omega_n)\sub D$ such that $\omega_n\to \omega$ in $\lone(\G)$,
because $\linfty(\G)$ is in standard form on $\ltwo(\G)$ 
and the domains of $P^{1/2}$ and $P^{-1/2}$ are dense 
in $\ltwo(\G)$. 
By the beginning of the proof, $\omega_n(r)$ is in the
$\lone_\sharp(\G)$-closure  of $D$. A simple calculation shows that 
\[
\|\omega(r) - \omega_n(r)\|_\sharp \le e^{r^2/4}\|\omega - \omega_n\|, 
\]
and so $\omega(r)$ is in the $\lone_\sharp(\G)$-closure 
of $D$. As $r\to \infty$, we have $\omega(r)\to \omega$ 
and $\omega(r)^\sharp = \omega^\sharp(r)\to \omega^\sharp$ 
(since $\omega\in\lone_\sharp(\G)$). Therefore $\omega$ is in the
$\lone_\sharp(\G)$-closure of $D$, as claimed.
\end{proof}

\section{Density of products in $\lone_\sharp(\G)$}\label{sec:density}

The main result of this section is that, in analogy to $\lone(\G)$,
the convolution products are linearly dense in $\lone_\sharp(\G)$ with
respect to its natural norm. We shall prove this result based on two
(closely related) lemmas,  the first of which is from
\cite[Proposition~A.1]{brannan-daws-samei:cb-repn}.

\begin{lemma}\label{lem:one} 
Let $x,y\in \linfty(\G)$ satisfy $\ip{y}{\omega^\sharp} = \ip{x^*}{\omega^*}$
for all $\omega\in \lone_\sharp(\G)$.
Then $y\in \dom(S)$ and $S(y) = x^*$.
\end{lemma}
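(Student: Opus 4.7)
The plan is to smooth $y$ and $x$ by the Gaussian adapted to the scaling group $(\tau_t)$, verify the desired identity on the smoothed pieces, and then pass to the limit using $\sigma$-weak closedness of $S$. The first step is to unpack the hypothesis: since $\ip{x^*}{\omega^*} = \overline{\ip{x}{\omega}}$, it reads $\ip{y}{\omega^\sharp} = \overline{\ip{x}{\omega}}$ for every $\omega\in\lone_\sharp(\G)$, which is exactly the identity forced by $y\in\dom(S)$ and $S(y)=x^*$ via the defining property of $\sharp$.

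Following the proof of Lemma~\ref{lem:sharp-core}, I would set
\[
y(r) = \frac{r}{\sqrt\pi}\int_{-\infty}^\infty e^{-r^2t^2}\,\tau_t(y)\,dt,
\qquad
x(r) = \frac{r}{\sqrt\pi}\int_{-\infty}^\infty e^{-r^2t^2}\,\tau_t(x)\,dt,
\]
with the integrals taken $\sigma$-weakly. Standard theory of analytic generators shows that these elements are entire for $(\tau_t)$, hence lie in $\dom(S)$, and converge weak$^*$ to $y$ and $x$ respectively as $r\to\infty$. In parallel, the predual smoothing $\omega(r) = \frac{r}{\sqrt\pi}\int e^{-r^2t^2}\,\omega\circ\tau_t\,dt$ maps $\lone_\sharp(\G)$ into itself and satisfies $(\omega(r))^\sharp = \omega^\sharp(r)$, because $t\mapsto\omega\circ\tau_t$ is $\|\cdot\|_\sharp$-continuous (as noted in the proof of Lemma~\ref{lem:sharp-core}), making $\omega(r)$ a $\|\cdot\|_\sharp$-convergent integral through which $\sharp$ may be passed.

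The key identity is then a Fubini-style computation: for $\omega\in\lone_\sharp(\G)$, combining $(\omega\circ\tau_t)^\sharp = \omega^\sharp\circ\tau_t$ with the hypothesis applied to each $\omega\circ\tau_t$ gives
\[
\ip{y(r)}{\omega^\sharp}
= \frac{r}{\sqrt\pi}\int e^{-r^2t^2}\,\ip{y}{(\omega\circ\tau_t)^\sharp}\,dt
= \overline{\ip{x(r)}{\omega}}.
\]
On the other hand, $y(r)\in\dom(S)$ together with the definition of $\sharp$ yields $\ip{y(r)}{\omega^\sharp} = \overline{\ip{S(y(r))^*}{\omega}}$; comparing over the norm-dense $\lone_\sharp(\G)\sub\lone(\G)$ forces $S(y(r)) = x(r)^*$. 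Letting $r\to\infty$ and invoking $\sigma$-weak closedness of $S$ on the pair $(y(r),x(r)^*)$ delivers $y\in\dom(S)$ and $S(y) = x^*$.

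I expect the only real work to be the justification that $\sharp$ commutes with the Gaussian integral (equivalently, that $\omega(r)\in\lone_\sharp(\G)$ with $(\omega(r))^\sharp = \omega^\sharp(r)$), which reduces to the $\|\cdot\|_\sharp$-continuity that the paper has already put in place; the remaining ingredients — weak$^*$ closedness of $S = R\tau_{-i/2}$ and the weak$^*$ convergence $y(r)\to y$ — are standard.
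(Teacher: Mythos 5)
Your argument is correct, and it is essentially the approach the paper takes: Lemma~\ref{lem:one} itself is only cited (from Proposition~A.1 of \cite{brannan-daws-samei:cb-repn}), but your Gaussian smearing, the identity $(\omega\circ\tau_t)^\sharp=\omega^\sharp\circ\tau_t$, and the appeal to $\sigma$-weak closedness of $S$ are exactly the ingredients of the paper's proof of the sibling Lemma~\ref{lem:two}. The one step worth making explicit is that $(\omega\circ\tau_t)^*=\omega^*\circ\tau_t$ (since $\tau_t$ is a $*$-automorphism), which is what turns the hypothesis applied to $\omega\circ\tau_t$ into $\ip{\tau_t(x)^*}{\omega^*}$ and hence into $\overline{\ip{x(r)}{\omega}}$ after integration.
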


\begin{lemma}\label{lem:two}
Let $x,y\in \linfty(\G)$ satisfy $\ip{y}{(\omega_1\conv\omega_2)^\sharp}
= \ip{x^*}{\omega_1^* \conv \omega_2^*}$ for all $\omega_1,\omega_2\in
\lone_\sharp(\G)$.  Then $y\in \dom(S)$ with $S(y)=x^*$.
\end{lemma}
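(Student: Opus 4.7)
The plan is to reduce to Lemma~\ref{lem:one} by smoothing $(y,x)$ along the scaling group and then exploiting density of products in $\lone(\G)$ (which is considerably easier than density with respect to $\|\cdot\|_\sharp$).

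I would first observe that the hypothesis is invariant under the substitution $(y,x)\mapsto(\tau_t(y),\tau_t(x))$: indeed $\Delta\tau_t=(\tau_t\otimes\tau_t)\Delta$ gives $(\omega_1\conv\omega_2)\circ\tau_t=(\omega_1\circ\tau_t)\conv(\omega_2\circ\tau_t)$, while $R\tau_t=\tau_t R$ and $\tau_t$ being a $*$-automorphism yield $(\omega\circ\tau_t)^\sharp=\omega^\sharp\circ\tau_t$ and $(\omega\circ\tau_t)^*=\omega^*\circ\tau_t$. I would then introduce the Gaussian smoothings
\[
y(r)=\frac{r}{\sqrt\pi}\int_{-\infty}^{\infty}e^{-r^2t^2}\tau_t(y)\,dt,\qquad
x(r)=\frac{r}{\sqrt\pi}\int_{-\infty}^{\infty}e^{-r^2t^2}\tau_t(x)\,dt,
\]
interpreted as $\sigma$-weak integrals. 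By integrating the $\tau_t$-invariant hypothesis, the pair $(y(r),x(r))$ still satisfies the hypothesis, and $y(r),x(r)$ are $\tau$-analytic, so in particular $y(r)\in\dom(S)$ for every $r>0$.

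The crux is to show that $S(y(r))=x(r)^*$ for each $r>0$. Since $y(r)\in\dom(S)$, the definition of $\sharp$ gives $\omega^\sharp(y(r))=\omega^*(S(y(r)))$ for every $\omega\in\lone_\sharp(\G)$; applied with $\omega=\omega_1\conv\omega_2$, and using $(\omega_1\conv\omega_2)^*=\omega_1^*\conv\omega_2^*$, the hypothesis rewrites as $(\omega_1\conv\omega_2)^*\bigl(S(y(r))-x(r)^*\bigr)=0$ for all $\omega_1,\omega_2\in\lone_\sharp(\G)$. Now products of $\lone_\sharp(\G)$-elements are $\lone$-norm dense in $\lone(\G)\cdot\lone(\G)$ (by density of $\lone_\sharp(\G)$ in $\lone(\G)$ and joint continuity of convolution), and $\lone(\G)\cdot\lone(\G)$ is itself $\lone$-norm dense in $\lone(\G)$: if $\phi\in\linfty(\G)$ annihilates every $\omega_1\conv\omega_2$ then $(\omega_1\otimes\omega_2)\Delta(\phi)=0$ for all $\omega_i$, whence $\Delta(\phi)=0$ and $\phi=0$ by injectivity of $\Delta$. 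As $*$ is an anti-linear isometry of $\lone(\G)$, the functionals $(\omega_1\conv\omega_2)^*$ are likewise norm dense in $\lone(\G)$, and therefore $S(y(r))=x(r)^*$.

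To finish, let $r\to\infty$: Gaussian concentration together with $\sigma$-weak continuity of $(\tau_t)$ gives $y(r)\to y$ and $x(r)\to x$ $\sigma$-weakly, hence $S(y(r))=x(r)^*\to x^*$ $\sigma$-weakly as well. The main technical point I would need is the $\sigma$-weak closedness of $S$: $\tau_{-i/2}$, as the analytic extension of the $\sigma$-weakly continuous one-parameter group $(\tau_t)$, is $\sigma$-weakly closed, and $R$ is $\sigma$-weakly bicontinuous, so $S=R\tau_{-i/2}$ is $\sigma$-weakly closed. This delivers $y\in\dom(S)$ with $S(y)=x^*$, as required.
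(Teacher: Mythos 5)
Your proof is correct and takes essentially the same route as the paper: both smear $y$ and $x$ along the scaling group with a Gaussian kernel, identify $S(y(r))$ with $x(r)^*$ by testing against the (dense) linear span of products via the defining relation for $\sharp$ and the injectivity of $\Delta$, and conclude by letting $r\to\infty$ and using $\sigma$-weak closedness of $S$. The only cosmetic difference is that you apply the definition of $\sharp$ directly to the analytic element $y(r)$ where the paper instead invokes Lemma~5.25 of Kustermans--Vaes.
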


\begin{proof}
For $n\in\mathbb N$ define the smear 
\[ y(n) = \frac{n}{\sqrt\pi} \int_{-\infty}^\infty e^{-n^2t^2} \tau_t(y) \ dt \]
where $(\tau_t)$ is the scaling group. Define $x(n)$ similarly using $x^*$.  

As $R$ commutes with $(\tau_t)$ and $S=R\tau_{-i/2}$, it follows that
$\omega^\sharp\circ\tau_t = (\omega\circ\tau_t)^\sharp$
for $\omega\in \lone_\sharp(\G)$.
Using that $\cop\circ\tau_t = (\tau_t\otimes\tau_t)\circ\cop$
we find that for $\omega_1,\omega_2\in \lone_\sharp(\G)$
\begin{align*}
\ip{\cop(y(n))}{\omega_2^\sharp \otimes \omega_1^\sharp}
&= \frac{n}{\sqrt\pi} \int_{-\infty}^\infty e^{-n^2t^2}
\ip{\cop(\tau_t(y))}{\omega_2^\sharp \otimes \omega_1^\sharp} \ dt \\
&= \frac{n}{\sqrt\pi} \int_{-\infty}^\infty e^{-n^2t^2}
\ip{\cop(y)}{(\omega_2\circ\tau_t)^\sharp\otimes
   (\omega_1\circ\tau_t)^\sharp} \ dt \\ 
&= \frac{n}{\sqrt\pi} \int_{-\infty}^\infty e^{-n^2t^2}
\ip{\cop(x^*)}{(\omega_1\circ\tau_t)^* \otimes (\omega_2\circ\tau_t)^*} \ dt \\
&= \ip{\cop(x(n))}{\omega_1^* \otimes \omega_2^*}. \end{align*}
As $y(n)\in \dom(S)$ the von Neumann algebraic version of
\cite[Lemma~5.25]{kustermans-vaes:LCQG} shows that
\[ \ip{\cop(y(n))}{\omega_2^\sharp \otimes \omega_1^\sharp}
= \ip{\cop(S(y(n)))}{\omega_1^* \otimes \omega_2^*}. \]
Thus $\cop(S(y(n))) = \cop(x(n))$, 
and as $\cop$ is injective, $S(y(n))=x(n)$.  
Now $y(n)\rightarrow y$ in the $\sigma$-weak topology, and
$x(n)\rightarrow x^*$.  As $S$ is a $\sigma$-weakly closed operator,
it follows that $y\in \dom(S)$ with $S(y)=x^*$, as required.
\end{proof}

\begin{theorem}\label{thm:prod-dense}
Let $\G$ be a locally compact quantum group.  
Then the set $\set{\omega\conv\tau}{\omega,\tau\in \lone_\sharp(\G)}$ 
is linearly dense in $\lone_\sharp(\G)$ in its natural norm.
\end{theorem}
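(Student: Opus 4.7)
The plan is to argue by Hahn--Banach duality, with Lemma~\ref{lem:two} as the key input. Suppose for contradiction that the linear span of $\{\omega_1\conv\omega_2 : \omega_1,\omega_2\in\lone_\sharp(\G)\}$ fails to be dense in $\lone_\sharp(\G)$ for its natural norm. Then there is a nonzero bounded $\mathbb C$-linear functional $\phi$ on $\lone_\sharp(\G)$ that vanishes on every convolution product; the goal is to show $\phi$ must be identically zero.

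First I need a concrete representation of elements of $\lone_\sharp(\G)^*$. The map $\iota\col\lone_\sharp(\G)\to\lone(\G)\oplus_\infty\lone(\G)$, $\omega\mapsto(\omega,(\omega^\sharp)^*)$, is $\mathbb C$-linear (being the composition of two antilinear operations) and isometric, because $*$ is isometric on $\lone(\G)$ and $\|\omega\|_\sharp=\max(\|\omega\|,\|\omega^\sharp\|)$. Applying Hahn--Banach to extend $\phi$ to $\lone(\G)\oplus_\infty\lone(\G)$ and using the duality $(\lone(\G)\oplus_\infty\lone(\G))^*=\linfty(\G)\oplus_1\linfty(\G)$, I obtain $x,z\in\linfty(\G)$ with
\[
\phi(\omega) = \ip{x}{\omega} + \overline{\ip{z}{\omega^\sharp}}
\qquad (\omega\in\lone_\sharp(\G)).
\]

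Next I would feed the vanishing condition $\phi(\omega_1\conv\omega_2)=0$ into this formula. Using that $\sharp$ is an antilinear antihomomorphism on $\lone_\sharp(\G)$, so that $(\omega_1\conv\omega_2)^\sharp=\omega_2^\sharp\conv\omega_1^\sharp$, and that $\omega\mapsto\omega^*$ is an antilinear homomorphism on $\lone(\G)$, taking complex conjugates of both sides rewrites the condition as
\[
\ip{z}{(\omega_1\conv\omega_2)^\sharp}
= \ip{(-x)^*}{\omega_1^*\conv\omega_2^*}
\qquad (\omega_1,\omega_2\in\lone_\sharp(\G)).
\]
This is exactly the hypothesis of Lemma~\ref{lem:two}, with $y=z$ and $-x$ in place of $x$. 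Therefore $z\in\dom(S)$ with $S(z)=-x^*$.

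Finally, since $z\in\dom(S)$, the defining property of $\sharp$ gives $\overline{\ip{z}{\omega^\sharp}}=\ip{S(z)^*}{\omega}=-\ip{x}{\omega}$, whence $\phi(\omega)=\ip{x}{\omega}-\ip{x}{\omega}=0$ for all $\omega\in\lone_\sharp(\G)$, contradicting $\phi\neq 0$. The main obstacle is really Step~1: producing a genuinely $\mathbb C$-linear description of $\lone_\sharp(\G)^*$ in spite of $\sharp$ being antilinear, so that one can bookkeep the complex conjugates and sign carefully enough to land precisely on the hypothesis of Lemma~\ref{lem:two}; once that is done, Lemma~\ref{lem:two} and the definition of $\sharp$ do all the work.
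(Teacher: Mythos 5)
Your argument is correct and is essentially the paper's own proof: the same isometric embedding of $\lone_\sharp(\G)$ into an $\ell^\infty$-direct sum (you use $\omega\mapsto(\omega,(\omega^\sharp)^*)$ where the paper uses the conjugate space $\overline{\lone(\G)}$, which is the same device), the same Hahn--Banach duality, and Lemma~\ref{lem:two} as the key input. The only cosmetic difference is that you close the argument with the easy direction of the $\sharp$-duality directly rather than citing Lemma~\ref{lem:one}, which is perfectly fine.
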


\begin{proof}
For a Banach space $E$, let $\overline{E}$ be the conjugate space to $E$.
For $x\in E$ let $\overline x\in \overline E$ be the image of $x$, so
$\overline{x} + \overline{y} = \overline{x+y}$ and $t\overline{x}
=\overline{ \overline{t} x}$ for $x,y\in E, t\in\mathbb C$.  We identify
$(\overline E)^*$ with $\overline{E^*}$ via $\ip{\overline\mu}{\overline x}
= \overline{\ip{\mu}{x}}$.

Then the map
\[ \lone_\sharp(\G) \to \lone(\G) \oplus_\infty \overline{\lone(\G)},
\quad \omega \mapsto (\omega,\overline{ \omega^\sharp }) \]
is a linear isometry.  Thus the adjoint 
$\linfty(\G)\oplus_1 \overline{\linfty(\G)}\to \lone_\sharp(\G)^*$ 
is a quotient map.  So any member of $\lone_\sharp(\G)^*$ is induced
by a pair $(x,\overline y)$ with $x,y\in \linfty(\G)$, and the dual pairing is
\[ \ip{(x,\overline y)}{\omega}
= \ip{x}{\omega} + \ip{\overline{y}}{\overline{\omega^\sharp}}
= \ip{x}{\omega} + \overline{ \ip{y}{\omega^\sharp} }. \]

Firstly, $(x,\overline y)=0$ if and only if
$\ip{-x^*}{\omega^*} = \ip{y}{\omega^\sharp}$
for all $\omega\in \lone_\sharp(\G)$
if and only if, by Lemma~\ref{lem:one}, $y\in \dom(S)$ with $S(y)=-x^*$.

Now let $(x,\overline y)$ annihilate all elements of the form $\omega\conv\tau$,
with $\omega,\tau\in \lone_\sharp(\G)$.  Then
\[ 0 = \ip{x}{\omega\conv\tau}
+ \overline{ \ip{y}{\tau^\sharp \conv \omega^\sharp} }
\implies
\ip{-x^*}{\omega^*\conv\tau^*} = \ip{y}{\tau^\sharp \conv \omega^\sharp}. \]
By Lemma~\ref{lem:two}, $y\in \dom(S)$ with $S(y)=-x^*$.  That is,
$(x,\overline y)=0$.  So by the Hahn--Banach theorem, the result follows.
\end{proof}

\section{Completely positive definite functions}  \label{sec:cpd}

The definition of completely positive definite functions on a locally
compact quantum group $\G$ was proposed  
by the first named author in \cite{daws:cp-multipliers}, and in this section 
we show that, as conjectured in \cite{daws:cp-multipliers},
such elements are precisely the completely positive multipliers. 
This result may be viewed as a version of Bochner's theorem 
because the completely positive multipliers are known 
by \cite{daws:cp-multipliers} to be of 
the form $(\id\otimes\hat\mu)(\wW^*)$, with
$\hat\mu\in C_0^u(\dual\G)^*_+$.

We begin with a preliminary result, also of independent interest. 

\begin{propn} \label{prop:S(x)}
Let $x\in\linfty(\G)$ be completely positive definite.
Then $x^*\in \dom(S)$ and $S(x^*) = x$.
\end{propn}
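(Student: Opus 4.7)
My plan is to apply Lemma~\ref{lem:two} with $y = x^*$, taking the role of ``$x$'' in that lemma to be $x^*$ itself; its conclusion then reads $x^* \in \dom(S)$ with $S(x^*) = (x^*)^* = x$. The hypothesis of the lemma reduces to showing
\[
\ip{x^*}{(\omega_1 \conv \omega_2)^\sharp}
= \ip{x}{\omega_1^* \conv \omega_2^*}
\qquad (\omega_1, \omega_2 \in \lone_\sharp(\G)).
\]
Both sides are jointly continuous for the $\sharp$-norm on $\omega_1, \omega_2$ (the left because $\lone_\sharp(\G)$ is a Banach $*$-algebra, the right already in the $\lone$-norm because $\omega \mapsto \omega^*$ is an isometry and $x\in\linfty(\G)$). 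So by Lemma~\ref{lem:sharp-core} and a smearing argument of the kind used there, it suffices to verify the identity when $\omega_1 = \omega_{\xi, \alpha}$ and $\omega_2 = \omega_{\eta, \beta}$ for $\xi, \eta, \alpha, \beta$ entire analytic vectors of the scaling group, i.e.\ lying in every $\dom(P^z)$. Such vectors form a dense subspace invariant under the powers of $P$ and under $\hat J$, which ensures that each $\omega_i^\sharp$ is again representable with vectors of the same type.

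For such $\omega_1, \omega_2$, the completely positive definite formula is applicable to every product we encounter. Expanding $(\omega_1 \conv \omega_2)^\sharp = \omega_2^\sharp \conv \omega_1^\sharp$ and applying the formula presents the left-hand side as $\pp{\Phi(\theta_{\hat J P^{1/2}\eta,\, \xi})\alpha}{\hat J P^{-1/2}\beta}$. For the right-hand side, $\ip{x}{\tau} = \overline{\ip{x^*}{\tau^*}}$ combined with the fact that $*$ is an anti-linear homomorphism on $\lone(\G)$ squaring to the identity gives $\ip{x}{\omega_1^* \conv \omega_2^*} = \overline{\ip{x^*}{\omega_1 \conv \omega_2}}$; writing $\omega_2 = (\omega_2^\sharp)^\sharp$ and applying CPD again produces the right-hand side as the complex conjugate of $\pp{\Phi(\theta_{\xi,\, \hat J P^{1/2}\eta})\hat J P^{-1/2}\beta}{\alpha}$. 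Since $\Phi$, being positive, is $*$-preserving and $\theta_{u, v}^* = \theta_{v, u}$, one has $\Phi(\theta_{\hat J P^{1/2}\eta, \xi}) = \Phi(\theta_{\xi, \hat J P^{1/2}\eta})^*$, and the basic adjoint identity $(T^*u \mid v) = \overline{(Tv \mid u)}$ then matches the two expressions.

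The main obstacle is the very point of arranging the reduction so that the CPD formula can be applied to all factors appearing in $\omega_2^\sharp \conv \omega_1^\sharp$ and in $\omega_1 \conv (\omega_2^\sharp)^\sharp$: the core $D$ from Lemma~\ref{lem:sharp-core} does not obviously contain the $\sharp$-images of its own elements within the required domains of $P^{\pm 1/2}$. Restricting to entire analytic vectors for the scaling group makes this bookkeeping go through cleanly, after which the identity becomes a bare-hands consequence of the positivity of $\Phi$ and the symmetry of rank-one operators under adjunction.
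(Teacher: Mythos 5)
Your argument is correct, but it reaches the conclusion by a genuinely different route from the paper's. The paper applies the completely positive definite formula only to products already in the canonical shape $\omega_{\xi,\alpha}\conv\omega_{\eta,\beta}^\sharp$ appearing in the definition, obtains $\ip{x^*}{\omega}=\ip{x}{\omega^{\sharp *}}$ on such products by essentially the computation you perform (self-adjointness of the positive map $\Phi$ together with $\theta_{u,v}^*=\theta_{v,u}$), then extends this identity to all $\omega\in\lone_\sharp(\G)$ using Theorem~\ref{thm:prod-dense} combined with Lemma~\ref{lem:sharp-core}, and finishes with Lemma~\ref{lem:one}. You instead verify the hypothesis of Lemma~\ref{lem:two} directly, which removes the dependence on Theorem~\ref{thm:prod-dense} (density of products in $\lone_\sharp(\G)$) altogether; the price is that both $(\omega_1\conv\omega_2)^\sharp$ and $\omega_1\conv\omega_2$ must be rewritten into the form the CPD formula accepts, which forces your restriction to entire analytic vectors for $P$ and the accompanying checks: that this class is stable under $\hat J$ (true, since $R$ commutes with the scaling group, so $\hat J$ carries entire vectors to entire vectors and hence $\omega^\sharp$ stays in the class) and that the corresponding functionals are still $\sharp$-norm dense (smear the vectors rather than the functionals in the argument of Lemma~\ref{lem:sharp-core}). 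Both routes rest on the same key identity; the paper's is more economical given that Theorem~\ref{thm:prod-dense} is proved anyway for the GNS construction, while yours has the merit of showing that Proposition~\ref{prop:S(x)} does not in fact depend on the density of products.
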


\begin{proof}
For every $\xi,\eta\in\dom(P^{1/2})$ and   $\alpha,\beta\in\dom(P^{-1/2})$
\begin{align*}
\ip{x^*}{\omega_{\xi, \alpha} \conv \omega_{\eta, \beta}^{\sharp}}
  &= \ip{ \Phi(\theta_{\xi,\eta}) }{ \omega_{\beta,\alpha} }
  = \conj{\ip{ \Phi(\theta_{\xi,\eta}^*) }{ \omega_{\beta,\alpha}^*}}
  = \conj{\ip{ \Phi(\theta_{\eta,\xi}) }{ \omega_{\alpha,\beta} }}\\
  &= \conj{\ip{x^*}{\omega_{\eta, \beta} \conv \omega_{\xi, \alpha}^{\sharp}}}
  = \ip{x}{\omega_{\eta, \beta}^* \conv \omega_{\xi, \alpha}^{\sharp *}}
  = \ip{x}{(\omega_{\xi, \alpha}\conv \omega_{\eta, \beta}^\sharp)^{\sharp *}}.
\end{align*}
It follows from Theorem~\ref{thm:prod-dense} and
Lemma~\ref{lem:sharp-core} that 
\[
\ip{x^*}{\omega} = \ip{x}{\omega^{\sharp *}}
\] 
for every $\omega\in \lone_\sharp(\G)$. 
Then it follows from Lemma~\ref{lem:one} that $x^*\in\dom(S)$ 
and $S(x^*) = x$.
\end{proof}

The following lemma shows that a GNS-type construction 
works for positive definite functions. 
Compare with \cite[Proposition~2.4.4]{dixmier:C*-algebras},
but note that $\lone_\sharp(\G)$ does not necessarily have
a bounded approximate identity, the lack of which is 
remedied by the density of products in $\lone_\sharp(\G)$
(Theorem~\ref{thm:prod-dense}).

\begin{lemma} \label{lem:GNS}
Let $x\in\linfty(\G)$ be positive definite. 
Then
\[
(\omega\mid\tau) = \ip{x^*}{\tau^\sharp \conv \omega}
\]
defines a pre-inner-product on $\lone_\sharp(\G)$;
let $\Lambda\col \lone_\sharp(\G) \to H$ be the associated 
map to the Hilbert space $H$ obtained by completion. 
Then, 
\[
\pi(\omega)\Lambda(\tau) = \Lambda(\omega\conv\tau)
\]
defines a non-degenerate $*$-representation $\pi$ of $\lone_\sharp(\G)$ on $H$. 
\end{lemma}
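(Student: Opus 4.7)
The plan is to run the usual GNS construction for the positive functional $\omega\mapsto\ip{x^*}{\omega}$ on the Banach $*$-algebra $\lone_\sharp(\G)$, with the spectral-radius trick supplying boundedness of $\pi$ and Theorem~\ref{thm:prod-dense} supplying non-degeneracy (both replacing arguments that in the standard \cite[2.4.4]{dixmier:C*-algebras} proof rely on a bounded approximate identity).

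First I would verify that $(\cdot\mid\cdot)$ is a pre-inner product. Sesquilinearity is immediate from the anti-linearity of $\sharp$. Positivity is the key point: $(\omega\mid\omega) = \ip{x^*}{\omega^\sharp\conv\omega}$; setting $\eta=\omega^\sharp$ and using $(\omega^\sharp)^\sharp=\omega$ rewrites this as $\ip{x^*}{\eta\conv\eta^\sharp}\ge 0$, which holds by the definition of positive definiteness. Conjugate symmetry then follows from the usual polarization argument. Let $N$ be the null-space and $H$ the Hilbert space completion of $\lone_\sharp(\G)/N$, with quotient map $\Lambda$.

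For the module structure, the key identity is
\[ (\omega\conv\tau\mid\eta)
= \ip{x^*}{\eta^\sharp\conv\omega\conv\tau}
= \ip{x^*}{(\omega^\sharp\conv\eta)^\sharp\conv\tau}
= (\tau\mid\omega^\sharp\conv\eta), \]
using that $\sharp$ is an anti-homomorphism. Combined with Cauchy--Schwarz, this shows $N$ is a left ideal, so $\pi(\omega)$ is well-defined on $\Lambda(\lone_\sharp(\G))$; the same identity supplies the formal adjoint relation $(\pi(\omega)\alpha\mid\beta)=(\alpha\mid\pi(\omega^\sharp)\beta)$.

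The main obstacle is boundedness of $\pi(\omega)$. I would use the spectral-radius argument: fix $\omega$ and put $a=\omega^\sharp\conv\omega$ (self-adjoint). Iterated Cauchy--Schwarz, together with $(a\conv\tau\mid a\conv\tau)=(\tau\mid a^2\conv\tau)$, yields
\[ (\tau\mid a\conv\tau)^{2^n}
\le (\tau\mid\tau)^{2^n-1}\,(\tau\mid a^{2^n}\conv\tau). \]
Bounding the right-hand factor crudely by $\|x\|\|\tau\|_\sharp^2\|a\|_{\lone}^{2^n}$, taking $2^n$-th roots and sending $n\to\infty$ leaves only $\|a\|_{\lone}\le\|\omega\|_\sharp^2$, so $\|\pi(\omega)\Lambda(\tau)\|^2=(\tau\mid a\conv\tau)\le\|\omega\|_\sharp^2(\tau\mid\tau)$ and $\|\pi(\omega)\|\le\|\omega\|_\sharp$. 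Then $\pi$ is a $*$-homomorphism (multiplicativity from associativity of $\conv$; the $*$-identity extends by continuity from the dense image of $\Lambda$), and non-degeneracy follows from Theorem~\ref{thm:prod-dense} because $\pi(\lone_\sharp(\G))\Lambda(\lone_\sharp(\G))=\Lambda(\lone_\sharp(\G)\conv\lone_\sharp(\G))$ is dense in $H$.
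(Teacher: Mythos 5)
Your proof is correct and follows essentially the same route as the paper: the GNS construction with boundedness of $\pi(\omega)$ obtained from the spectral-radius-type estimate and non-degeneracy from the density of products (Theorem~\ref{thm:prod-dense}). The only difference is that you prove the estimate $\|\pi(\omega)\Lambda(\tau)\|^2\le \|\omega^\sharp\conv\omega\|\,\|\Lambda(\tau)\|^2$ by hand via iterated Cauchy--Schwarz, whereas the paper cites \cite[Corollary~3.1.6]{dales:banach-algebras} to get the (slightly sharper, but here inessential) bound by $r(\omega^\sharp\conv\omega)$; you also spell out the positivity check $\ip{x^*}{\omega^\sharp\conv\omega}=\ip{x^*}{\eta\conv\eta^\sharp}\ge 0$ with $\eta=\omega^\sharp$, which the paper leaves implicit.
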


\begin{proof}
Such GNS-type results are usually stated for algebras with an approximate
identity (see for example \cite[Section~3.1]{dales:banach-algebras}
or \cite[Section~2.4]{dixmier:C*-algebras}) so for completeness, we give
the details in this slightly more general setting.
When $x\in \linfty(\G)$ is positive definite,
\[
\pp{\omega}{\tau} = \ip{x^*}{\tau^\sharp\conv \omega}.
\]
defines a positive sesquilinear form on $\lone_\sharp(\G)$.
As in the statement, let $H$ be the Hilbert space completion 
of $\lone_\sharp(\G)/\N$, where $\N$ denotes the associated null space,
and let $\Lambda\col \lone_\sharp(\G) \to H$ be the 
map taking $\omega\in\lone_\sharp(\G)$ to the image of $\omega + \N$ 
in $H$. We are left to show that 
\[
\pi(\omega)\Lambda(\tau) = \Lambda(\omega\conv\tau)
\]
defines a non-degenerate $*$-representation of $\lone_\sharp(\G)$ on $H$. 
If we can show that $\pi(\omega)$ is well-defined 
and bounded, then it follows easily that $\pi$ is a 
$*$-homomorphism. 

Let $r$ denote the spectral radius on $\lone_\sharp(\G)$.
By \cite[Corollary~3.1.6]{dales:banach-algebras},
\begin{align*}
\|\pi(\omega)\Lambda(\tau)\|^2 
&= \ip{x^*}{\tau^\sharp \conv \omega^\sharp \conv \omega \conv \tau}
\le r(\omega^\sharp\conv \omega)\ip{x^*}{\tau^\sharp\conv \tau}
 =  r(\omega^\sharp\conv \omega)\|\Lambda(\tau)\|^2.
\end{align*}
This shows that $\pi(\omega)$ maps $\N$ to $\N$ and hence is well-defined.
Moreover, we see that $\pi(\omega)$ defines a bounded operator on $H$. 

Finally, since $\lone_\sharp(\G)$ is 
the closed linear span of $\lone_\sharp(\G) \conv \lone_\sharp(\G)$
by Theorem~\ref{thm:prod-dense}
and $\Lambda$ is continuous, the $*$-representation $\pi$ is
non-degenerate. 
\end{proof}

\begin{theorem}
An element $x\in\linfty(\G)$ is completely positive definite 
if and only if it is a completely positive multiplier. 
In particular, every completely positive definite $x\in\linfty(\G)$
is in $ M(C_0(\G))$.
\end{theorem}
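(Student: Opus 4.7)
The implication (completely positive multiplier) $\Rightarrow$ (completely positive definite) is already recorded in the preliminaries, via \cite[Proposition~6.1]{daws:cp-multipliers}. Conversely, thanks to \cite{daws:cp-multipliers} it suffices to produce a positive functional $\hat\mu \in C_0^u(\dual\G)^*_+$ satisfying $x = (\id\otimes\hat\mu)(\wW^*)$: any such presentation automatically makes $x$ a completely positive multiplier, and in particular forces $x \in M(C_0(\G))$.

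So assume $x$ is completely positive definite with normal CP map $\Phi$. Specialising the defining identity to $\xi=\eta$ and $\alpha=\beta$, then invoking Lemma~\ref{lem:sharp-core} for density, shows that $x$ is positive definite, so Lemma~\ref{lem:GNS} yields the GNS Hilbert space $H$ with map $\Lambda\col \lone_\sharp(\G)\to H$ and non-degenerate $*$-representation $\pi$. The spectral-radius estimate in that proof actually gives $\|\pi(\omega)\|\le \|\omega\|_\sharp$, so $\pi$ is contractive for the Banach $*$-algebra norm on $\lone_\sharp(\G)$; by universality, $\pi$ extends to a $*$-representation $\tilde\pi\col C_0^u(\dual\G)\to\B(H)$.

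The crux is then to exhibit a vector functional on $\tilde\pi(C_0^u(\dual\G))$ whose pull-back through $\lambda_u$ recovers $x$. In a classical GNS setting such a vector would come from a bounded approximate identity in $\lone_\sharp(\G)$, which is absent here: indeed, Example~\ref{ex:not-cp} shows that positive-definiteness alone does not suffice. This is where complete positive-definiteness enters substantively. I would Stinespring-dilate $\Phi$ as $\Phi(a) = V^*(a\otimes 1_K)V$ with $V\col \ltwo(\G)\to\ltwo(\G)\otimes K$; the defining identity then reads
\[ \ip{x^*}{\omega_{\xi,\alpha}\conv\omega_{\eta,\beta}^\sharp} = \pp{(\theta_{\xi,\eta}\otimes 1_K)V\beta}{V\alpha}, \]
which, combined with Theorem~\ref{thm:prod-dense} and Lemma~\ref{lem:sharp-core}, sets up an isometric intertwiner between $H$ and a closed subspace of $\ltwo(\G)\otimes K$. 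Under this intertwiner $\tilde\pi$ is implemented by a unitary corepresentation of $\G$ on $K$, and the desired functional $\hat\mu$ appears as the vector functional built from $V$; verifying $x = (\id\otimes\hat\mu)(\wW^*)$ is then the original CP-definite identity read in reverse.

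The main obstacle is ensuring that the Stinespring data assemble into an honest corepresentation of $\G$, i.e.\ that $V$ really intertwines the coproduct (this is the quantum-group analogue of showing that a coproduct-compatible Stinespring dilation produces a corepresentation). Proposition~\ref{prop:S(x)}, which guarantees $x^*\in\dom(S)$, will be used to control the unbounded antipode in these calculations, and the density of products from Theorem~\ref{thm:prod-dense} will let us reduce all identities to convolutions where the coproduct structure is explicit.
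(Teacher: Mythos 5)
Your reduction and the first half of your argument track the paper's proof closely: deducing positive definiteness from the defining identity, the GNS construction of Lemma~\ref{lem:GNS} (non-degenerate thanks to Theorem~\ref{thm:prod-dense}), the minimal Stinespring dilation $\Phi(\theta)=V^*(\theta\otimes 1)V$ with $V\xi=\sum_i a_i\xi\otimes e_i$ and $\sum_i a_i^*a_i<\infty$, and the passage to a unitary corepresentation via \cite[Corollary~4.3]{kustermans:universal} are all exactly the paper's ingredients. Two corrections of detail: the isometry produced by comparing the two formulae for $\bigl(\Lambda(\omega_{\xi,\alpha}^\sharp)\bigm|\Lambda(\omega_{\eta,\beta}^\sharp)\bigr)$ goes from $H$ onto $K$ itself (unitarily, by minimality), not into a subspace of $\ltwo(\G)\otimes K$; and a by-product you omit but will need is that the mere well-definedness of this map forces each $a_i$ to lie in $\linfty(\G)$, which is what makes $\cop(a_i)$ meaningful in the sequel.

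The gap is in your endgame. Your sentence ``the desired functional $\hat\mu$ appears as the vector functional built from $V$'' is where the entire difficulty is hiding, and I do not see how to make it precise: there is no evident normal positive functional on $\B(K)$ built from $V$ whose composition with $\omega\mapsto(\omega\otimes\id)(U)$ recovers $\omega\mapsto\ip{x^*}{\omega}$. The vector state that would represent $\hat\mu$ through $U$ is the image of the GNS cyclic vector for $\hat\mu$, an object that exists only \emph{a posteriori}; in the coamenable case it is manufactured as a weak cluster point of $\Lambda(e_\alpha)$ for a bounded approximate identity, and Example~\ref{ex:not-cp} shows that no such device is available in general. Producing $\hat\mu$ directly is in fact the hard implication (completely positive multiplier)$\implies$(\ref{defn:fsp}) from \cite{daws:cp-multipliers}, which presupposes already knowing that $x$ is a completely positive multiplier. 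The paper therefore does not construct $\hat\mu$ at all; instead it uses unitarity of $U$ and the formula $U^*\bigl(\xi\otimes\sum_i\ip{a_i}{\omega}e_i\bigr)=\sum_i(\omega\otimes\id)\cop(a_i)\xi\otimes e_i$ to prove $\sum_i a_i^*\otimes a_i\otimes 1=\sum_i\cop(a_i^*)_{13}\cop(a_i)_{23}$, deduces $1\otimes\Phi(y)=\hat W\bigl((\Phi\otimes\id)(\hat W^*(1\otimes y)\hat W)\bigr)\hat W^*$, concludes that $\Phi$ restricts to $\linfty(\dual\G)$ and intertwines $\hat\cop$ --- hence is the adjoint of a completely positive left multiplier with symbol $x_0\in M(C_0(\G))$ --- and finally identifies $x_0=x$ using Proposition~\ref{prop:S(x)} (that $x^*\in\dom(S)$ with $S(x^*)=x$) together with density of the relevant functionals. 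These steps, in particular the intertwining computation and the identification $x_0=x$, are the substance of the proof; your sketch replaces them with ``read the identity in reverse''. As written the argument is incomplete: you should either carry out the multiplier argument above or supply a genuine construction of $\hat\mu$.
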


\begin{proof}
As already noted, the ``if'' part is proved in \cite{daws:cp-multipliers},
so we let $x\in\linfty(\G)$ be completely positive definite.
Let $\Phi\col\B(\ltwo(\G))\to \B(\ltwo(\G))$ 
be the associated completely positive map such that 
\[ 
\ip{x^*}{\omega_{\xi, \alpha} \conv \omega_{\eta, \beta}^{\sharp}}
    = \pp{ \Phi(\theta_{\xi,\eta})\beta}{ \alpha }
\]
whenever $\xi,\eta\in\dom(P^{1/2})$ and  $\alpha,\beta\in\dom(P^{-1/2})$.
Then $\Phi$ has a Stinespring dilation of the form
\[ 
\Phi(\theta) = V^*(\theta\otimes 1)V \qquad (\theta\in\B_0(\ltwo(\G))),
\]
where $V\col\ltwo(\G) \to \ltwo(\G)\otimes K$ is a bounded map
for some Hilbert space $K$
(see \cite[Section~5]{daws:cp-multipliers} for details).  Letting
$(e_i)$ be an orthonormal basis of $K$, we can define a family $(a_i)$
in $\B(\ltwo(\G))$ with  $\sum_i a_i^*a_i < \infty$ such that
\[ 
V\xi = \sum_i a_i\xi\otimes e_i
\]
and hence 
\[
\Phi(\theta) = \sum_i a_i^* \theta a_i. 
\]
We may also take the Stinespring dilation to be \emph{minimal} so that 
\[
\set{(\theta\ot\id)V\xi}{\xi\in \ltwo(\G), \theta\in\B_0(\ltwo(\G))}
\]
is linearly dense in $\ltwo(\G)\otimes K$.  This is equivalent to
vectors of the form
\[ 
\sum_i \ip{a_i}{\omega} \eta \otimes e_i \qquad
(\omega\in\B(\ltwo(\G))_*, \eta\in\ltwo(\G))
\]
being linearly dense; equivalently that vectors of the form
$\sum_i \ip{a_i}{\omega} e_i$ are dense in $K$ 
as $\omega\in\B(\ltwo(\G))_*$ varies.

Let $(\Lambda,\pi,H)$ be the GNS construction for $x$ from
Lemma~\ref{lem:GNS}. Then, for  
$\xi,\eta\in\dom(P^{1/2})$ and  $\alpha,\beta\in\dom(P^{-1/2})$,
\begin{align*} 
\bigpp{\Lambda(\omega_{\xi,\alpha}^\sharp)}{\Lambda(\omega_{\eta,\beta}^\sharp)}_H
 &= \ip{x^*}{\omega_{\eta,\beta} \conv \omega_{\xi,\alpha}^\sharp} 
 = \pp{ \Phi(\theta_{\eta, \xi}) \alpha}{ \beta } \\
 &= \sum_i \pp{ a_i(\alpha)}{\xi} \conj{\pp{a_i(\beta)}{\eta}}. 
\end{align*}
Let $q:\B(\ltwo(\G))_* \rightarrow L^1(\G)$ be the quotient map.
Since the functionals of the form $\omega_{\xi,\alpha}^\sharp$ 
are dense in $\lone_\sharp(\G)$ by Lemma~\ref{lem:sharp-core},  
it follows that there is an isometry
\[ v\col H \to K; \quad 
\Lambda(q(\omega)^\sharp)\mapsto \sum_i \ip{a_i}{\omega^*} e_i 
\qquad (\omega\in\B(\ltwo(\G))_*, q(\omega)\in\lone_\sharp(\G))
\]
(note that 
$\|\sum_i \ip{a_i}{\omega^*} e_i\|^2 \le 
\sum_i |\ip{a_i}{\omega^*}|^2 \le \|\omega\|^2 \|\sum_i a_i^* a_i\|$).
As we have a minimal Stinespring dilation, $v$ has dense range and is
hence unitary.
A corollary of $v$ even being well-defined is that for each $i$ and each
$\omega\in \B(\ltwo(\G))_*$, the value of $\ip{a_i}{\omega}$ depends only
on the value $q(\omega)$.
Hence $a_i\in \linfty(\G)$ for each $i$.

Consider now the non-degenerate $*$-representation of $\lone_\sharp(\G)$ on
$K$ given by $v \pi(\cdot) v^*$. By Kustermans 
\cite[Corollary~4.3]{kustermans:universal}, 
there is an associated unitary corepresentation $U$ of $\G$; 
so $U\in \linfty(\G)\vnot\B(K)$ and 
$v \pi(\omega) v^* = (\omega\otimes\id)(U)$. 
As $U$ is a unitary corepresentation,
we know that $(\omega^\sharp\otimes\id)(U) = (\omega\otimes\id)(U)^*
= (\omega^*\otimes\id)(U^*)$.  If $\omega^*\in L^1_\sharp(\G)$, it
follows that $(\omega\otimes\id)(U^*) = v \pi(\omega^{*\sharp}) v^*$, and thus 
\begin{align*}
&\Bigpp{ U^*\big(\xi \otimes \sum_i \ip{a_i}{\omega} e_i\big)}
      {\alpha \otimes e_j}
 = \Bigpp{ (\omega_{\xi,\alpha}\ot\id)(U^*)\sum_i \ip{a_i}{\omega} e_i}{e_j}\\
&\qquad = \Bigpp{ v \pi(\omega_{\alpha,\xi}^\sharp) v^*
   \sum_i \ip{a_i}{\omega} e_i}{e_j}
= \Bigpp{ v \pi(\omega_{\alpha,\xi}^\sharp) \Lambda(\omega^{*\sharp})}{e_j} \\
&\qquad = \bigpp{ v\Lambda(\omega_{\alpha,\xi}^\sharp\conv \omega^{*\sharp})}{e_j}
   = \ip{a_j}{ (\omega^* \star \omega_{\alpha,\xi})^* } 
   = \ip{a_j}{\omega\conv \omega_{\xi, \alpha}} \\
&\qquad = \Bigpp{ \sum_i (\omega\otimes\id)\cop(a_i) \xi \otimes e_i}
         { \alpha\otimes e_j}
\end{align*}
whenever $\xi\in\dom(P^{1/2})$, $\alpha\in\dom(P^{-1/2})$.
As an aside, we note that this is precisely the way in which $U^*$ is
defined  in \cite[Proposition~5.2]{daws:cp-multipliers}.
Thus, perhaps as expected, if $x$ comes from a
completely positive multiplier, then the two approaches to forming
representations agree. 

Since $U$ is unitary, we have, for every 
$\xi,\eta\in\dom(P^{1/2})$ and $\omega_1,\omega_2\in\lone_\sharp(\G)^*$,
\begin{align*} 
\pp{\xi}{\eta} \sum_i \ip{a_i}{\omega_1} \overline{\ip{a_i}{\omega_2}}
&= \Bigpp{U^*\big(\xi\otimes\sum_i\ip{a_i}{\omega_1} e_i\big)}
   {U^*\big(\eta\otimes\sum_i\ip{a_i}{\omega_2} e_i\big)} \\
&= \Bigpp{\sum_i (\omega_1\otimes\id)\cop(a_i)\xi \otimes e_i}
   {\sum_i (\omega_2\otimes\id)\cop(a_i)\eta \otimes e_i } \\
&= \sum_i \bigpp{(\omega_2\otimes\id)\cop(a_i)^* 
                 (\omega_1\otimes\id)\cop(a_i) \xi}{ \eta }.
\end{align*}
It follows that
\[ \sum_i \ip{a_i^* \otimes a_i \otimes 1}
          {\omega_2^* \otimes\omega_1\otimes \omega_{\xi,\eta}} 
= \sum_i \ip{\cop(a_i^*)_{13}  \cop(a_i)_{23}}
            {\omega_2^* \otimes\omega_1 \otimes \omega_{\xi,\eta}}. \]
As this holds for a dense collection of
$\omega_1,\omega_2, \xi,\eta$ it follows that
\[ 
\sum_i a_i^* \otimes a_i \otimes 1 = \sum_i \cop(a_i^*)_{13} \cop(a_i)_{23}
\]
(recall that $\sum_i a_i^*a_i<\infty$ so the sums on both sides do converge
$\sigma$-weakly.).
Then, for $\theta_{\xi,\eta}\in\B_0(\ltwo(\G))$ and $\alpha,\beta\in \ltwo(\G)$,
\begin{align*} 
\pp{ \Phi(\theta_{\xi,\eta})\beta }{ \alpha } 1
&= \sum_i \pp{ a_i^*\theta_{\xi,\eta} a_i\beta }{ \alpha } 1
= \sum_i \pp{ a_i^*\xi }{ \alpha } \pp{ a_i\beta }{ \eta } 1 \\
&= \sum_i (\omega_{\xi,\alpha} \otimes \omega_{\beta,\eta}\otimes\id)
         (a_i^*\otimes a_i\otimes 1) \\
&= \sum_i (\omega_{\xi,\alpha} \otimes \omega_{\beta,\eta}\otimes\id)
         \cop(a_i^*)_{13} \cop(a_i)_{23}\\
&= \sum_i (\omega_{\xi,\alpha} \otimes \id)\cop(a_i^*)
         (\omega_{\beta,\eta}\otimes\id)\cop(a_i) \\
&= \sum_i (\omega_{\beta,\alpha}\otimes\id)\big( \cop(a_i^*)
          (\theta_{\xi,\eta}\otimes 1) \cop(a_i) \big).
\end{align*}
It follows that
\[ 
\Phi(\theta)\otimes 1 = \sum_i \cop(a_i^*)(\theta\otimes 1)\cop(a_i)
  \qquad (\theta\in\B_0(\ltwo(\G))). 
\]
By normality, and using that $\Delta(\cdot) = W^*(1\otimes\cdot)W$,
we see that for $y\in\B(\ltwo(\G))$,
\[ 
\Phi(y)\otimes 1 = \sum_i W^*(1\otimes a_i^*)W(y\otimes 1)W^*(1\otimes a_i)W
\]
and hence
\begin{equation} \label{eq:Phi-L} \begin{split}
1\ot \Phi(y) &= \sum_i \hat W(a_i^*\otimes 1)\hat W^*(1\otimes y)
   \hat W(a_i\otimes 1) \hat W^* \\
&= \hat W\bigl((\Phi\ot\id)(\hat W^*(1\ot y)\hat W)\bigr)\hat W^*.
\end{split} \end{equation} 
In particular, for $\hat x\in \linfty(\dual\G)$,
\[ 
\hat W^*(1\otimes \Phi(\hat x))\hat W = (\Phi\otimes \id)\hat\cop(\hat x). 
\]
Now, the left-hand-side is a member of $\linfty(\dual\G)\vnot\B(\ltwo(\G))$,
and the right-hand-side is a member of $\B(\ltwo(\G))\vnot \linfty(\dual\G)$,
and so both sides are really in $\linfty(\dual\G) \vnot \linfty(\dual\G)$
(by taking bicommutants for example).  Then
\[ (\Phi\otimes\id)\big( \hat\cop(\hat x) (1\otimes \hat y) \big)
= \big((\Phi\otimes\id) \hat\cop(\hat x)\big) (1\otimes \hat y)
\in \linfty(\dual\G)\vnot \linfty(\dual\G), \]
and so, as 
$\set{\hat\cop(\hat x) (1\otimes \hat y)}%
{\hat x,\hat y\in \linfty(\dual\G)}$ 
is a $\sigma$-weakly, linearly dense subset of
$\linfty(\dual\G)\vnot \linfty(\dual\G)$, it follows that
$\Phi$ maps $\linfty(\dual\G)$ to $\linfty(\dual\G)$.
(We remark that the fact that $\set{\hat\cop(\hat x) (1\otimes \hat y)}%
{\hat x,\hat y\in \linfty(\dual\G)}$ is linearly $\sigma$-weakly dense
is shown directly in the remark after
\cite[Proposition~1.21]{van-daele:lcqg_vn_approach}.  One can prove this
by noting that $D(S)$ is $\sigma$-weakly dense, and then using the
von Neumann algebraic version of the characterisation of $S$ given by
\cite[Corollary~5.34]{kustermans-vaes:LCQG}.)

Let $L$ be the restriction of $\Phi$ to $\linfty(\dual\G)$.  Then the
calculation above shows that
\[ \hat\cop L = (L\otimes\id)\hat\cop, \]
and so $L$ is the adjoint of a completely positive left multiplier on
$\lone(\dual\G)$. Comparing \eqref{eq:Phi-L} with 
\cite[Proposition~3.3]{daws:cp-multipliers} we see that 
$\Phi$ coincides with the extension of $L$ used 
in \cite{daws:cp-multipliers, junge-neufang-ruan:mults}. In particular, by 
\cite[Propositions~3.2]{daws:cp-multipliers} there is 
$x_0\in  M(C_0(\G))$ such that 
\[  (x_0\otimes 1)W^* = (\id\otimes L)(W^*) = (\id\otimes \Phi)(W^*). \]
Equivalently,
\[  x_0\otimes 1 = \sum_i (1\otimes a_i^*) \cop(a_i). \]
Moreover, by \cite[Proposition~6.1]{daws:cp-multipliers}, 
$x_0$ satisfies
\[ 
\ip{x_0}{\omega_{\alpha,\eta} \conv \omega_{\xi,\beta}^{\sharp *}}
= \pp{ \Phi(\theta_{\xi,\eta})\alpha}{\beta }
= \ip{x^*}{\omega_{\xi,\beta} \conv \omega_{\eta,\alpha}^{\sharp}}
= \ip{S(x^*)}{\omega_{\alpha,\eta} \conv \omega_{\xi,\beta}^{\sharp *}}
\]
for every $\xi,\eta\in\dom(P^{1/2})$ and  $\alpha,\beta\in\dom(P^{-1/2})$
because $x^*\in\dom(S)$ and $S(x^*) = x$ by Proposition~\ref{prop:S(x)}.
It follows, by density of such $\xi,\eta,\alpha,\beta$, that 
$x_0 =  x$. 
In particular, $x$ is a completely positive multiplier.
\end{proof}

Two immediate corollaries of the proof are the following.

\begin{cor}
Let $x$ be completely positive definite, this being witnessed by the completely
positive map $\Phi$.  Then $\Phi$ restricted to $L^\infty(\dual\G)$ is the
adjoint of a completely bounded left multiplier $L$ of $L^1(\dual\G)$, and
$\Phi$ is the canonical extension of $L^*$.
\end{cor}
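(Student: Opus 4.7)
The plan is to read both assertions directly off the construction performed in the preceding theorem; essentially no new machinery is needed.

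First I would introduce $M := \Phi|_{\linfty(\dual\G)}$. The proof of the theorem already showed that $\Phi$ leaves $\linfty(\dual\G)$ invariant, so $M$ is a well-defined normal completely positive (hence completely bounded) map on $\linfty(\dual\G)$. That proof also verified the intertwining identity $\hat\cop\, M = (M\ot\id)\hat\cop$. Dualising this identity yields that the preadjoint $L\col\lone(\dual\G)\to\lone(\dual\G)$ of $M$ satisfies $L(\hat\omega\conv\hat\tau) = L(\hat\omega)\conv\hat\tau$ for all $\hat\omega,\hat\tau\in\lone(\dual\G)$, so $L$ is a completely bounded left multiplier with $M = L^*$, which is the first assertion. (As a by-product, the theorem further identifies $L$ with the multiplier $L_x$ coming from $x$.)

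For the second assertion, I would substitute $M = L^*$ into equation~\eqref{eq:Phi-L} to obtain
\[
1\ot\Phi(y) = \hat W\bigl((L^*\ot\id)(\hat W^*(1\ot y)\hat W)\bigr)\hat W^*
\qquad(y\in\B(\ltwo(\G))),
\]
which is precisely the formula by which \cite[Proposition~3.3]{daws:cp-multipliers} and \cite{junge-neufang-ruan:mults} define the canonical normal extension of $L^*$ from $\linfty(\dual\G)$ to all of $\B(\ltwo(\G))$. Hence $\Phi$ coincides with this canonical extension.

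I do not expect any genuine obstacle here, since both statements are effectively bookkeeping of identities already established in the theorem's proof; the only step that might want a line of justification is the identification of the ``canonical extension'' in the cited references with the displayed formula, and this is built into their constructions.
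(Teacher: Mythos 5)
Your proposal is correct and follows essentially the same route as the paper: the paper derives this corollary directly from the proof of the preceding theorem, where it is shown that $\Phi$ maps $\linfty(\dual\G)$ into itself, that the restriction intertwines $\hat\cop$ (hence is the adjoint of a completely bounded left multiplier of $\lone(\dual\G)$), and that comparison of \eqref{eq:Phi-L} with \cite[Proposition~3.3]{daws:cp-multipliers} identifies $\Phi$ with the canonical extension. No further comment is needed.
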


\begin{cor}\label{cor:sqr_sum_coeffs}
Let $x$ be positive definite, with GNS construction $(H,\Lambda,\pi)$,
and let $(f_i)$ be an orthonormal basis of $H$.
Suppose there is a dense subset $D\subseteq L^1_\sharp(\G)$ and a family
$(a_i)$ in $\mc B(L^2(\G))$ with 
$(\Lambda(\omega^\sharp)\mid f_i)_H = \ip{a_i}{\omega^*}$ 
for all $\omega\in D$ and all $i$.  If $\sum_i a_i^*a_i < \infty$,
then $x$ is a completely positive multiplier. 
\end{cor}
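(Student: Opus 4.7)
The plan is to reverse-engineer the Stinespring construction from the proof of the theorem: build a normal completely positive map $\Phi$ out of the data $(a_i)$, show that $\Phi$ witnesses the complete positive definiteness of $x$, and then invoke the preceding theorem.

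First, set $K=\ell^2$ with orthonormal basis $(e_i)$ and define $V\col L^2(\G)\to L^2(\G)\otimes K$ by $V\xi = \sum_i a_i\xi\otimes e_i$. Since $V^*V = \sum_i a_i^*a_i$ converges $\sigma$-weakly by hypothesis, $V$ is bounded, and
\[
\Phi(\theta) \;:=\; V^*(\theta\otimes 1)V \;=\; \sum_i a_i^*\theta a_i
\]
is a normal completely positive map on $\mc B(L^2(\G))$.

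Second, I would extend the hypothesised identity from $D$ to all of $L^1_\sharp(\G)$. The bound $\|\Lambda(\omega)\|_H^2 = \ip{x^*}{\omega^\sharp\conv\omega}\le\|x\|\,\|\omega\|_\sharp^2$, together with the fact that $\sharp$ is a $\|\cdot\|_\sharp$-isometry on $L^1_\sharp(\G)$, shows that $\omega\mapsto\bigpp{\Lambda(\omega^\sharp)}{f_i}_H$ is $\|\cdot\|_\sharp$-continuous, while $\omega\mapsto\ip{a_i}{\omega^*}$ is even $L^1$-continuous. Hence density of $D$ gives the identity for every $\omega\in L^1_\sharp(\G)$.

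Third, apply the extended identity to $\omega_{\xi,\alpha}$ and $\omega_{\eta,\beta}$ for $\xi,\eta\in\dom(P^{1/2})$ and $\alpha,\beta\in\dom(P^{-1/2})$; these lie in $L^1_\sharp(\G)$ by the remark preceding Lemma~\ref{lem:sharp-core}, and $\omega_{\xi,\alpha}^* = \omega_{\alpha,\xi}$ so that $\ip{a_i}{\omega_{\xi,\alpha}^*} = (a_i\alpha\mid\xi)$. Expanding along the basis $(f_i)$ and using the GNS relation $\bigpp{\Lambda(\tau)}{\Lambda(\omega)}_H = \ip{x^*}{\omega^\sharp\conv\tau}$ for $\omega,\tau\in L^1_\sharp(\G)$, a direct computation gives
\[
\ip{x^*}{\omega_{\xi,\alpha}\conv\omega_{\eta,\beta}^\sharp}
\;=\; \sum_i (a_i\beta\mid\eta)\,\overline{(a_i\alpha\mid\xi)}
\;=\; \pp{\Phi(\theta_{\xi,\eta})\beta}{\alpha}.
\]
This is precisely the statement that $\Phi$ witnesses complete positive definiteness of $x$, so the preceding theorem implies that $x$ is a completely positive multiplier.

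The only hurdle is keeping careful track of the involutions $*$ and $\sharp$ together with the swap of arguments in $\omega_{\xi,\alpha}$; the analytic content beyond that is contained entirely in the $\|\cdot\|_\sharp$-continuity bound on $\Lambda$ forced by the positive definiteness of $x$.
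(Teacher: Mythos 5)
Your proposal is correct and is essentially the argument the paper intends: the corollary is stated as an immediate consequence of the theorem's proof, and your reconstruction --- form $\Phi=\sum_i a_i^*(\cdot)a_i$ from the coefficient family, extend the coefficient identity by $\|\cdot\|_\sharp$-continuity of $\omega\mapsto\Lambda(\omega^\sharp)$, verify the completely-positive-definite identity by Parseval, and invoke the theorem --- matches the displayed computation $\bigpp{\Lambda(\omega_{\xi,\alpha}^\sharp)}{\Lambda(\omega_{\eta,\beta}^\sharp)}_H=\pp{\Phi(\theta_{\eta,\xi})\alpha}{\beta}$ in that proof. The bookkeeping of $*$, $\sharp$ and the order of indices in $\omega_{\xi,\alpha}$ checks out.
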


Supposing that $L^1_\sharp(\G)$ is separable, the Gram--Schmidt process
allows us to find an orthonormal basis of the form $f_i = \Lambda(\tau_i)$
for some sequence $(\tau_i)\subseteq L^1_\sharp(\G)$.  Then
\[ (f_i\mid\Lambda(\omega^\sharp))_H = \ip{x^*}{\omega\star\tau_i}
= \ip{(\id\ot\tau_i)\cop(x^*)}{\omega}, \]
and so we may set $a_i^* = \tau_i \star x^* := (\id\ot\tau_i)\cop(x^*)$.  
Thus the existence of $(a_i)$ is not the key fact; rather whether
$\sum_i a_i^*a_i$ converges is the key issue.  Below we shall see that
when $\G$ is coamenable, this is automatic, while
Example~\ref{ex:not-cp} shows that the condition does not always hold.

\section{Bounded approximate identity for $\lone_\sharp(\G)$}
\label{sec:bai_coamen_case}

To prove an analogue of Bochner's theorem for coamenable 
locally compact quantum groups, we shall need a 
bounded approximate identity for $\lone_\sharp(\G)$.
In this section we show that when $\G$ is coamenable,
$\lone_\sharp(\G)$ has a bounded approximate identity
in its natural norm -- in fact a contractive one. 
(The converse is obviously true.)
The proof  is inspired by Propositions~2.25 and~2.26 
in Kustermans's paper \cite{kustermans:one-parameter},
where the proof of the latter proposition is credited to 
A.~Van~Daele and J. Verding.  We prove a more general fact which is
inspired by the underlying idea in the proof (as we see it).

For $\omega\in \lone(\G)$, $z\in\complex$ and $r>0$, let
\[ \omega(r,z) = \frac{r}{\sqrt\pi} \int_{-\infty}^\infty
e^{-r^2(t-z)^2} \omega\circ\tau_t \ dt. \]
The integral converges in norm, as the function 
$t\mapsto \omega\circ \tau_t$
is norm-continuous.  A little calculation, and some complex analysis, shows
that
\[
\omega(r,z)\in \lone_\sharp(\G),\qquad
 \omega(r,z)^\sharp = \omega^*(r,z-i/2)\circ R. 
\]
See \cite[Section~5]{kustermans:LCQG-chapter} for example.  Notice that
\begin{equation}   \label{eq:sharp-norm}
\begin{split}
\| \omega(r,z)^\sharp\| &\leq \frac{r}{\sqrt\pi}
\int_{-\infty}^\infty \big| e^{-r^2(t-z+i/2)^2} \big|\, \|\omega\| \ dt
= e^{r^2(\Im(z)-1/2)^2} \|\omega\|. 
\end{split}
\end{equation}

By analogy with the definition of $L^1_\sharp(\G)$, define $M_\sharp(\G)$
to be the collection of those $\mu\in M(\G)$ such that there is $\mu^\sharp
\in M(\G)$ with
\[ \overline{ \ip{\mu}{S(a)^*} } = \ip{\mu^\sharp}{a}
\qquad (a\in D(S)\subseteq C_0(\G)). \]
As $S\circ *\circ S\circ * = \id$, it is easy to show that $\mu\mapsto
\mu^\sharp$ defines an involution on $M_\sharp(\G)$.
Set $\|\mu\|_\sharp = \max(\|\mu\|, \|\mu^\sharp\|)$ for 
$\mu\in M_\sharp(\G)$.

In the next theorem, we look at $\mu\in M(\G)$ such that $\mu\circ\tau_t=\mu$
for all $t$.  Then the constant function $F:\mathbb C\rightarrow M(\G);
z\mapsto\mu$ is holomorphic and satisfies that $F(t)=\mu\circ\tau_t$ for all
$t\in\mathbb R$.  Hence $\mu$ is invariant 
for the analytic continuation of the group $(\tau_t)$.  
In particular, $\mu\in M_\sharp(\G)$ and $\mu^\sharp = \mu^*\circ R$.

\begin{theorem}\label{thm:approx_meas_star}
Let $\mu\in M(\G)$ with $\mu\circ\tau_t=\mu$ for all $t$.
There is a net $(\omega_\alpha)$ in $L^1_\sharp(\G)$ 
such that $\|\omega_\alpha\|_\sharp \leq \|\mu\|$ for all $\alpha$,
and for every $x\in M(C_0(\G))$,
\[
\ip{\omega_\alpha}{x}\to\ip{\mu}{x}\qquad\text{and}\qquad
\ip{\omega_\alpha^\sharp}{x}\to\ip{\mu^\sharp}{x}=\overline{\ip{\mu}{R(x)^*}}.
\]
\end{theorem}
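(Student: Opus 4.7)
The strategy is to approximate $\mu$ by normal functionals in $\lone(\G)$, smear them through a complex-shifted Gaussian against the scaling group $(\tau_t)$ so that they land in $\lone_\sharp(\G)$, rescale by the resulting norm blow-up, and then diagonalise to simultaneously control both norm and convergence.

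For the initial step, extend $\mu\in M(\G)=C_0(\G)^*$ to a strictly continuous functional on $M(C_0(\G))$ of the same norm (using that $C_0(\G)$ is strictly dense), and then Hahn--Banach-extend to $\widetilde\mu\in\linfty(\G)^*$ with $\|\widetilde\mu\|=\|\mu\|$. Goldstine's theorem applied to $\lone(\G)\sub\linfty(\G)^*$ yields a net $(\nu_i)\sub\lone(\G)$ with $\|\nu_i\|\le\|\mu\|$ and $\nu_i\to\widetilde\mu$ weak* on $\linfty(\G)$; in particular $\ip{\nu_i}{y}\to\ip{\mu}{y}$ for every $y\in M(C_0(\G))$.

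For each pair $(i,r)$, set $\omega_{i,r}:=\nu_i(r,i/4)\in\lone_\sharp(\G)$. With $\Im(i/4)=1/4$, the straightforward Gaussian bound gives $\|\omega_{i,r}\|\le e^{r^2/16}\|\mu\|$; since the sharp equals $\nu_i^*(r,-i/4)\circ R$, equation~\eqref{eq:sharp-norm} yields $\|\omega_{i,r}^\sharp\|\le e^{r^2/16}\|\mu\|$. Set $\widetilde\omega_{i,r}:=e^{-r^2/16}\omega_{i,r}$, so $\|\widetilde\omega_{i,r}\|_\sharp\le\|\mu\|$. Now fix $x\in M(C_0(\G))$: since $\tau_t$ extends to an automorphism of $M(C_0(\G))$ and $\mu\circ\tau_t=\mu$ extends there by strict continuity, dominated convergence (with dominating function $e^{-r^2t^2+r^2/16}\|\mu\|\|x\|$) gives
\[
\ip{\omega_{i,r}}{x}=\frac{r}{\sqrt\pi}\int_{-\infty}^\infty e^{-r^2(t-i/4)^2}\ip{\nu_i}{\tau_t(x)}\,dt\to\ip{\mu}{x}\cdot\frac{r}{\sqrt\pi}\int_{-\infty}^\infty e^{-r^2(t-i/4)^2}\,dt=\ip{\mu}{x}
\]
as $i\to\infty$, the last integral being $1$ by a contour shift. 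An analogous computation based on $\omega_{i,r}^\sharp(x)=\frac{r}{\sqrt\pi}\int e^{-r^2(t+i/4)^2}\overline{\ip{\nu_i}{\tau_t(R(x)^*)}}\,dt$ and the fact that $R$ preserves $M(C_0(\G))$ gives $\ip{\omega_{i,r}^\sharp}{x}\to\overline{\ip{\mu}{R(x)^*}}=\ip{\mu^\sharp}{x}$.

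Finally, diagonalise: index by pairs $(F,\varepsilon)$ with $F\sub M(C_0(\G))$ finite and $\varepsilon>0$, directed by refinement and decrease of $\varepsilon$. For each such pair pick $r>0$ with $|1-e^{-r^2/16}|<\varepsilon$, then $i$ large enough that $|\widetilde\omega_{i,r}(x)-e^{-r^2/16}\mu(x)|<\varepsilon$ and the $\sharp$-analogue hold for every $x\in F$; a triangle inequality converts this to the required convergence $\ip{\widetilde\omega_{i,r}}{x}\to\ip{\mu}{x}$ on $M(C_0(\G))$. The principal obstacle is the initial extension step, because Goldstine naturally controls convergence on $\linfty(\G)$ rather than $M(C_0(\G))$; this is resolved by first producing the strict (norm-preserving) extension of $\mu$ to $M(C_0(\G))$ before invoking Hahn--Banach, which ensures the Goldstine approximant identifies correctly with $\mu$ against any multiplier.
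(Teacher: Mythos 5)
The overall architecture of your argument (weak$^*$ approximation of $\mu$ by normal functionals, Gaussian smearing into $\lone_\sharp(\G)$, rescaling, and a diagonalisation over finite sets and tolerances) matches the paper's, and the shifted smear $\nu_i(r,i/4)$ is a neat way to balance the norm bounds on $\omega$ and $\omega^\sharp$ symmetrically. However, there is a genuine gap at the central step, namely the interchange
\[
\lim_i \frac{r}{\sqrt\pi}\int_{-\infty}^\infty e^{-r^2(t-i/4)^2}\ip{\nu_i}{\tau_t(x)}\,dt
\;=\;
\frac{r}{\sqrt\pi}\int_{-\infty}^\infty e^{-r^2(t-i/4)^2}\,\lim_i\ip{\nu_i}{\tau_t(x)}\,dt.
\]
You justify this by ``dominated convergence'', but $(\nu_i)$ is a \emph{net} produced by Goldstine's theorem, and the dominated convergence theorem fails for nets (standard counterexample: the indicator functions of finite subsets of $[0,1]$, directed by inclusion, converge pointwise to $1$ while every integral is $0$). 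What is actually required is that $\ip{\nu_i}{\tau_t(x)}\to\ip{\mu}{\tau_t(x)}$ \emph{uniformly} for $t$ in a compact interval $[-N,N]$, the tails being absorbed by the Gaussian decay. Uniform convergence of a bounded weak$^*$-convergent net does hold on norm-compact sets, but $\set{\tau_t(x)}{t\in[-N,N]}$ is not norm compact for a general $x\in M(C_0(\G))$, because $t\mapsto\tau_t(x)$ is only strictly continuous, not norm continuous. So the limit cannot be passed inside the integral as written.

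This is precisely the point the paper's proof addresses with the step you omit: Cohen factorisation. Writing $\mu=\nu a$ with $a\in C_0(\G)$, $\|\mu-\nu\|$ small, and approximating $\nu$ (rather than $\mu$) by a bounded net $(\sigma_i)\sub\lone(\G)$, one tests $\sigma_i$ against $a\tau_t(x)$; since $a\in C_0(\G)$, the map $t\mapsto a\tau_t(x)$ \emph{is} norm continuous, so $\set{a\tau_t(x)}{t\in[-N,N],\,x\in F}$ is norm compact in $C_0(\G)$ and the weak$^*$ convergence $\sigma_i\to\nu$ is uniform there. Without this device (or some substitute yielding uniformity on compact $t$-intervals, e.g.\ an equicontinuity argument that is not available here), the proof does not close. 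The extension/Goldstine step that you single out as the ``principal obstacle'' is in fact unproblematic; the real obstacle is the one above.
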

\begin{proof}
Assume without loss of generality that $\|\mu\| = 1$. 
Let $\mc F$ be the collection of finite subsets of $M(C_0(\G))$, 
and turn $\Lambda = \mc F \times \mathbb N$ into a directed set for the order
$(F_1,n_1) \leq (F_2,n_2)$ if and only if $F_1\subseteq F_2$ and $n_1\leq n_2$.
For each $\alpha=(F,n)\in\Lambda$ choose $r>0$ so that 
$1-e^{-r^2/4}< 1/n$.  Choose $N$ so that
\begin{equation} \label{eq:choice-N}
\frac{r}{\sqrt\pi} \int_{-\infty}^{-N} e^{-r^2t^2} \ dt
= \frac{r}{\sqrt\pi} \int_N^\infty e^{-r^2t^2} \ dt
\leq \frac{1}{n}.     
\end{equation}
By the Cohen Factorisation Theorem,
there are $a\in C_0(\G)$ and  $\nu\in C_0(\G)^*$ 
such that $\|a\|\le 1$, $\|\mu-\nu\|\le 1/n$ and 
$\mu = \nu a$, where $\nu a(b) = \nu(ab)$ for $b\in C_0(\G)$ 
(see, for example, \cite[Proposition~A2]{masuda-nakagami-woronowicz}).
By the Hahn--Banach and Goldstine Theorems, we can find
a net $(\sigma_i)_{i\in I}$ in $L^1(\G)$ converging weak$^*$ to $\nu$ in $M(\G)$,
and with $\|\sigma_i\| \leq \|\nu\|$ for each $i$.

As the interval $[-N,N]$ is compact and the map $t\mapsto a\tau_t(x)$ is
norm continuous for any $x\in F$ (since $t\mapsto \tau_t(x)$ is
strictly continuous), it follows that we can find $i\in I$ with
\begin{equation} \label{eq:mu-approx} 
\big| \ip{\mu}{\tau_t(x)} - \ip{\sigma_i a}{\tau_t(x)}\big| 
=\big| \ip{\nu}{a\tau_t(x)} - \ip{\sigma_i}{a\tau_t(x)} \big| 
\leq \frac{\|x\|}{n} 
\end{equation}
whenever $t\in [-N,N]$ and $x\in F$. 
Set 
\[
\omega_\alpha = \frac{e^{-r^2/4}}{1+\frac{1}{n}} (\sigma_i a)(r,0).
\]
Since $\|\sigma_i a\|\le \|\nu\| \le (1+1/n) \|\mu\|$, we have
$\|\omega_\alpha\| \le e^{-r^2/4}\|\mu\| \leq 1$ 
and also 
$\|\omega_\alpha^\sharp\| \le 1$ by~\eqref{eq:sharp-norm}.

Now given $\epsilon>0$ and $x\in M(C_0(\G))$, 
choose $\alpha_0 = (F,n_0)$ such that $\{x, R(x)^*\}\sub F$ and 
$n_0 \ge 1+7\|x\|/\epsilon$. Since $\mu\circ\tau_t = \mu$ for all $t$,
we have for every $\alpha\ge \alpha_0$ that
\begin{equation} \label{eq:big-approx}
\begin{split} \big| \ip{\mu}{x} - \ip{\omega_\alpha}{x} \big|
&= \Big| \frac{r}{\sqrt\pi} \int_{-\infty}^\infty
e^{-r^2t^2} \Big(\ip{\mu}{\tau_t(x)} - 
 \frac{e^{-r^2/4}}{1+\frac{1}{n}}\ip{\sigma_i a}{\tau_t(x)} \Big) 
  \ dt \Big|\\
&\leq \frac{4\|x\|}{n} + 
\frac{r}{\sqrt\pi} \int_{-N}^N e^{-r^2t^2} 
  \big| \ip{\mu}{\tau_t(x)} - \ip{\sigma_i a}{\tau_t(x)} \big| \ dt\\
&\qquad + \frac{r}{\sqrt\pi} \int_{-N}^N e^{-r^2t^2} 
   \bigg(1-\frac{e^{-r^2/4}}{1+\frac{1}{n}}\bigg)
     |\ip{\sigma_i a}{\tau_t(x)}| \ dt 
\end{split}
\end{equation}
by \eqref{eq:choice-N}. Now
\[
\bigg(1-\frac{e^{-r^2/4}}{1+\frac{1}{n}}\bigg)
   |\ip{\sigma_i a}{\tau_t(x)}| 
\le (1 + \frac1n - e^{-r^2/4})\|x\|
\le \frac{2\|x\|}{n}
\]
by the choice of $r$.
Continuing from \eqref{eq:big-approx} and applying also \eqref{eq:mu-approx},
we have
\begin{align*}
\big| \ip{\mu}{x} -  \ip{\omega_\alpha}{x} \big|
\leq \frac{4\|x\|}{n} + \frac{\|x\|}{n} +  \frac{2\|x\|}{n}
< \epsilon. 
\end{align*}

Now consider $\mu^\sharp = \mu^*\circ R$.  
Using the extension of $\mu$ to a strictly continuous functional 
on $ M(C_0(\G))$, we can form $\mu(r,z)$ by
an integral which converges weakly when measured against 
elements in $ M(C_0(\G))$. As $\mu\circ\tau_t=\mu$
for all $t$, it follows that $\mu(r,0)=\mu$, and so
\[ \mu^\sharp = \mu(r,0)^\sharp = \mu^*(r,-i/2)\circ R
= \frac{r}{\sqrt\pi} \int_{-\infty}^\infty e^{-r^2(t+i/2)^2} \mu^*\circ\tau_t
\circ R \ dt. \]
Thus, by a similar calculation to the above,
\begin{align*}
\big|&\ip{\mu^\sharp}{x}  - \ip{\omega_\alpha^\sharp}{x} \big| \\
&= \Big| \frac{r}{\sqrt\pi} \int_{-\infty}^\infty e^{-r^2(t + i/2)^2} 
 \Big(\conj{\ip{\mu}{\tau_t(R(x)^*)}} - 
 \frac{e^{-r^2/4}}{1+\frac{1}{n}}
   \conj{\ip{\sigma_i a}{\tau_t(R(x)^*)}} \Big) \ dt \Big|\\
&\leq \frac{4\|x\|}{n}e^{r^2/4} + 
\frac{r}{\sqrt\pi} \int_{-N}^N e^{-r^2t^2}e^{r^2/4} 
 \big|\ip{\mu}{\tau_t(R(x)^*)} 
     - \ip{\sigma_i a}{\tau_t(R(x)^*)} \big| \ dt\\
&\qquad\qquad + \frac{r}{\sqrt\pi} \int_{-N}^N e^{-r^2t^2}e^{r^2/4} 
   \bigg(1-\frac{e^{-r^2/4}}{1+\frac{1}{n}}\bigg)
     |\ip{\sigma_i a}{\tau_t(R(x)^*)}| \ dt \\
&\leq \frac{7\|x\|}{n}e^{r^2/4}
    \le \frac{7\|x\|}{n-1} <\epsilon.
\end{align*}
Hence $\omega_\alpha^\sharp \rightarrow\mu^\sharp$ weak$^*$ in $M(C_0(\G))^*$.
\end{proof}

We remark that a standard technique would allow us, in the case when
$C_0(\G)$ is separable, to replace the net $(\omega_\alpha)$ by a
sequence in the above.

\begin{theorem}\label{thm:bai-in-lone-sharp}
Let $\G$ be coamenable.  Then $L^1_\sharp(\G)$ has a contractive
approximate identity, in its natural norm.
\end{theorem}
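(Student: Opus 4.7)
The plan is to apply Theorem~\ref{thm:approx_meas_star} with $\mu=\epsilon$, the counit of $\G$. Coamenability of $\G$ means that $\epsilon$ extends to a bounded $*$-homomorphism $C_0(\G)\to\complex$, so $\epsilon\in M(\G)$ with $\|\epsilon\|=1$; moreover, as the counit, $\epsilon\circ\tau_t=\epsilon$ and $\epsilon\circ R=\epsilon$, while $\epsilon^*=\epsilon$. Hence $\epsilon\in M_\sharp(\G)$ with $\epsilon^\sharp=\epsilon^*\circ R=\epsilon$ and $\|\epsilon\|_\sharp=1$. Theorem~\ref{thm:approx_meas_star} then produces a net $(\omega_\alpha)\subseteq\lone_\sharp(\G)$ with $\|\omega_\alpha\|_\sharp\le 1$ such that both $\omega_\alpha\to\epsilon$ and $\omega_\alpha^\sharp\to\epsilon$ in the weak$^*$ topology on $M(C_0(\G))^*$.

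It remains to show that $(\omega_\alpha)$, possibly after passing to convex combinations, is a contractive approximate identity for $\lone_\sharp(\G)$. For $\tau\in\lone(\G)$ and $x\in\linfty(\G)$, the counit identity $(\epsilon\otimes\id)\cop=\id$ gives
\[
\ip{\omega_\alpha\star\tau-\tau}{x}=\ip{\omega_\alpha-\epsilon}{(\id\otimes\tau)\cop(x)}.
\]
A standard fact about locally compact quantum groups is that for $\tau\in\lone(\G)$ the slice $x\mapsto(\id\otimes\tau)\cop(x)$ sends $\linfty(\G)$ into $M(C_0(\G))$; classically, this reduces to $L^1\star L^\infty\subseteq C_b=M(C_0)$. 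Combined with $\omega_\alpha\to\epsilon$ weak$^*$ on $M(C_0(\G))$, the right-hand side tends to zero, so $\omega_\alpha\star\tau\to\tau$ in the $\sigma(\lone,\linfty)$-topology. Symmetric arguments, using $\omega_\alpha^\sharp\to\epsilon$ and anti-multiplicativity of $\sharp$, yield the analogous convergence for $\tau\star\omega_\alpha$, $\tau^\sharp\star\omega_\alpha^\sharp$, and $\omega_\alpha^\sharp\star\tau^\sharp$.

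The main obstacle is to upgrade these weak convergences to $\lone$-norm convergence. For this I would invoke the (classically Dunford--Pettis via equicontinuity of translation) fact that, for fixed $\tau\in\lone(\G)$, the operator $\omega\mapsto\omega\star\tau$ sends bounded subsets of $\lone(\G)$ to relatively weakly compact ones. Then $\{\omega_\alpha\star\tau\}_\alpha$ is relatively weakly compact in $\lone(\G)$, and the $\sigma(\lone,\linfty)$-convergence identifies $\tau$ as its unique weak cluster point; hence $\omega_\alpha\star\tau\to\tau$ weakly in $\lone(\G)$. Mazur's theorem then gives convex combinations converging in $\lone$-norm, and a standard diagonalisation over finite subsets of $\lone_\sharp(\G)$ and tolerances $\varepsilon>0$ assembles simultaneous such convex combinations of the $\omega_\alpha$ into a single contractive approximate identity in the $\sharp$-norm.
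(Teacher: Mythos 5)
Your proposal follows the paper's proof almost exactly: take $\mu=\epsilon$ in Theorem~\ref{thm:approx_meas_star} (using $\epsilon\circ\tau_t=\epsilon$ and $\epsilon^\sharp=\epsilon^*\circ R=\epsilon$), deduce weak convergence of the four families $\omega_\alpha\star\tau$, $\tau\star\omega_\alpha$, $\omega_\alpha^\sharp\star\tau$, $\tau\star\omega_\alpha^\sharp$ from the fact that $(\id\ot\tau)\cop(y)\in M(C_0(\G))$, and finish with Day's convexity/diagonalisation argument. Two remarks. First, your ``main obstacle'' is illusory: $\lone(\G)$ is by definition the predual of $\linfty(\G)$, so $\lone(\G)^*=\linfty(\G)$ and $\sigma(\lone,\linfty)$-convergence \emph{is} weak convergence in the Banach space $\lone(\G)$; Mazur applies immediately and the Dunford--Pettis/weak-compactness step should be deleted. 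This is fortunate, because the claim that $\omega\mapsto\omega\star\tau$ maps bounded sets to relatively weakly compact sets is not something you can take for granted for a general locally compact quantum group. Second, you assert $\ip{\epsilon}{(\id\ot\tau)\cop(y)}=\ip{\tau}{y}$ directly from the counit identity, but $\epsilon$ is a priori only a functional on $M(C_0(\G))$ while $y$ ranges over $\linfty(\G)$, so this pairing identity needs a short justification; the paper supplies it by checking the identity on convolution products $\omega_1\star\omega_2$ via the strict topology and then using that such products are linearly dense in $\lone(\G)$. With these two repairs (one a deletion, one a small insertion) your argument coincides with the paper's.
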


\begin{proof}
Recall that $\G$ is coamenable if and only if there is a state
$\epsilon\in M(\G)$ with $(\id\otimes\epsilon) \Delta = 
(\epsilon\otimes\id) \Delta = \id$;
see \cite[Section~3]{bedos-tuset:amen-coamen}.  
It is easy to see that $\epsilon$ must be unique.  For each $t$, as
$\tau_t$ is a $*$-homomorphism which intertwines the coproduct,
it follows by uniqueness that $\epsilon \circ \tau_t = \epsilon$.
As above, then $\epsilon\in M_\sharp(\G)$ with $\epsilon^\sharp
= \epsilon^*\circ R = \epsilon$ (again by uniqueness).
Thus we can apply the previous theorem to find a 
net $(\omega_\alpha)$ in $L^1_\sharp(\G)$ such that 
$\|\omega_\alpha\|_\sharp\le 1$ for every $\alpha$,
and for every $x\in M(C_0(\G))$,
\[ \ip{\omega_\alpha}{x}\to\ip{\epsilon}{x}\qquad\text{and}\qquad
\ip{\omega_\alpha^\sharp}{x}\to\ip{\epsilon}{x}. \]
Now for every $\omega\in\lone(\G)$ and $y\in\linfty(\G)$
\[ \ip{\omega_\alpha \conv \omega}{y} 
= \ip{\omega_\alpha}{(\id\ot\omega)\cop(y)} 
\to \ip{\epsilon}{(\id\ot\omega)\cop(y)} \]
because $(\id\ot\omega)\cop(y)\in  M(C_0(\G))$ 
(see \cite[Lemma 5.2]{salmi:LUC} or \cite[Theorem 2.3]{runde:UCB}).
Indeed, as $W\in M\bigl(C_0(\G)\otimes \mc K(L^2(\G))\bigr)$, where 
$\mc K(L^2(\G))$ is the collection of compact operators on $L^2(\G)$,
we see that 
\[ \cop(y) = W^*(1\otimes y)W \in M\bigl(C_0(\G)\otimes \mc K(L^2(\G))\bigr)
\cap L^\infty(\G) \overline\ot L^\infty(\G), \]
thinking of both algebras concretely acting on $L^2(\G)\otimes L^2(\G)$.
In particular, if $\tau\in \mc K(L^2(\G))^*$ is any functional which,
when restricted to $L^\infty(\G)$, agrees with $\omega$, then
\[ (\id\ot\omega)\cop(y) = (\id\otimes\tau)(W^*(1\ot y)W)
\in M(C_0(\G)). \]
Moreover, arguing using the strict topology, 
for $\omega_1,\omega_2\in\lone(\G)$,
\begin{align*}
\ip{\epsilon}{(\id\ot(\omega_1\conv\omega_2))\cop(y)}
&=\ip{\epsilon}{(\id\ot\omega_1)\cop((\id\ot\omega_2)\cop(y))}\\
&= \ip{\epsilon\conv\omega_1}{(\id\ot\omega_2)\cop(y)}
= \ip{\omega_1\conv\omega_2}{y},
\end{align*}
and as convolutions are linearly dense in $\lone(\G)$
(due to $\cop$ being injective), 
we have 
\[ 
\ip{\epsilon}{(\id\ot\omega)\cop(y)} = \ip{\omega}{y}. 
\]

So $\omega_\alpha\conv\omega\to \omega$ weakly, and 
similarly $\omega\conv \omega_\alpha\to \omega$, 
$\omega_\alpha^\sharp\conv \omega\to \omega$ and 
$\omega\conv \omega_\alpha^\sharp\to \omega$, 
all in the weak topology. 
Now we apply a standard convexity argument 
to obtain a contractive approximate identity in $\lone_\sharp(\G)$.
We give a short sketch of the argument; for more details,
see for example Day \cite[pages~523--524]{day:amenable}.
First, we obtain a bounded left approximate identity $(\tau_\beta)$
consisting of convex combinations of elements in $(\omega_\alpha)$.
Since the convex combinations are taken from elements 
further and further along the net $(\omega_\alpha)$, we have
$\omega\conv \tau_\beta\to \omega$ weakly.
Thus we can iterate the construction and obtain 
a two-sided approximate identity $(\sigma_\gamma)$, 
still consisting 
of convex combinations of elements in $(\omega_\alpha)$.
Repeating this process twice more, we obtain a net $(e_\delta)$ 
such that both $(e_\delta)$ and $(e_\delta^\sharp)$ are 
a two-sided approximate identities in $\lone(\G)$. 
Then $(e_\delta)$ is a contractive approximate identity in
$\lone_\sharp(\G)$ since for example
\[
\|e_\delta\conv \omega - \omega\|_\sharp
= \max(\|e_\delta\conv \omega - \omega\|,
       \|\omega^\sharp\conv e_\delta^\sharp - \omega^\sharp\|)\to 0
\]
for every $\omega\in\lone_\sharp(\G)$.
\end{proof}

\begin{propn}
Let $\G$ be coamenable.  For any $\mu\in M_\sharp(\G)$, there is a net
$(\omega_\alpha)$ in $L^1_\sharp(\G)$ which converges weak$^*$ to $\mu$
in $M(\G)$, and with $\|\omega_\alpha\|_\sharp \leq \|\mu\|_\sharp$
for all $\alpha$.
\end{propn}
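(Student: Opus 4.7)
The plan is to set $\omega_\delta := e_\delta \star \mu$, where $(e_\delta)$ is the contractive approximate identity in $\lone_\sharp(\G)$ produced by Theorem~\ref{thm:bai-in-lone-sharp}. This realises the standard ``smoothing by a BAI'' strategy of approximating an element of a multiplier-like algebra by elements of the ideal that carries the BAI.

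For the $\sharp$-norm bound, note that $\lone(\G)$ is a two-sided convolution ideal in $M(\G)$, so $\omega_\delta \in \lone(\G)$. The involution $\sharp$ extends from $\lone_\sharp(\G)$ to $M_\sharp(\G)$ as an anti-multiplicative isometric involution, giving $\omega_\delta^\sharp = \mu^\sharp \star e_\delta^\sharp \in \lone(\G)$. Hence $\omega_\delta \in \lone_\sharp(\G)$, and submultiplicativity yields $\|\omega_\delta\|_\sharp \leq \|e_\delta\|_\sharp \|\mu\|_\sharp \leq \|\mu\|_\sharp$.

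For the weak$^*$ convergence, for $x \in C_0(\G)$ we compute
\[ \ip{\omega_\delta}{x} = \ip{e_\delta \star \mu}{x} = \ip{e_\delta}{(\id \otimes \mu)\cop(x)}. \]
Applying Cohen's factorization theorem to write $\mu = \nu a$ with $\nu \in M(\G)$ and $a \in C_0(\G)$, the slice $y := (\id \otimes \mu)\cop(x) = (\id \otimes \nu)\bigl(\cop(x)(1 \otimes a)\bigr)$ lies in $C_0(\G)$, since $\cop(x)(1 \otimes a) \in C_0(\G) \otimes C_0(\G)$; moreover the counit identity gives $\ip{\epsilon}{y} = \ip{\mu}{x}$. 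It therefore suffices to show that $e_\delta \to \epsilon$ weak$^*$ in $M(\G) = C_0(\G)^*$. The net $(e_\delta)$ is bounded in $M(\G)$, so by weak$^*$ compactness it has cluster points; for any such cluster point $\mu_0 \in M(\G)$, fix $\omega \in \lone(\G)$ and $x \in C_0(\G)$ and pick a subnet converging weak$^*$ to $\mu_0$. Since $(\id \otimes \omega)\cop(x) \in C_0(\G)$, along this subnet $\ip{e_\delta}{(\id \otimes \omega)\cop(x)} \to \ip{\mu_0}{(\id \otimes \omega)\cop(x)} = \ip{\mu_0 \star \omega}{x}$, while the BAI property $e_\delta \star \omega \to \omega$ in $\lone$-norm forces the same quantity, $\ip{e_\delta \star \omega}{x}$, to converge to $\ip{\omega}{x}$. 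Hence $\mu_0 \star \omega = \omega$ for every $\omega \in \lone(\G)$, and by uniqueness of the counit (already noted in the proof of Theorem~\ref{thm:bai-in-lone-sharp}), $\mu_0 = \epsilon$.

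The main obstacle is the careful handling of the various convergence notions: the $\lone(\G)$-norm convergence of $e_\delta \star \omega$, the weak$^*$ convergence of $(e_\delta)$ in $C_0(\G)^*$, and the verification that the slice $(\id \otimes \mu)\cop(x)$ lies in $C_0(\G)$ rather than only in the larger $M(C_0(\G))$. Each ingredient is routine on its own, but stitching them together requires attention.
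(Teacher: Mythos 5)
Your proof is correct and takes essentially the same route as the paper's: convolve $\mu$ with the contractive approximate identity of $\lone_\sharp(\G)$, use the ideal property of $\lone(\G)$ in $M(\G)$ and the anti-multiplicativity of $\sharp$ for the norm bound, and use Cohen factorisation together with the counit identity for the weak$^*$ convergence. The only (minor) difference is that you re-derive $e_\delta\to\epsilon$ weak$^*$ via a cluster-point argument, whereas the net produced in the proof of Theorem~\ref{thm:bai-in-lone-sharp} already converges to $\epsilon$ against all of $M(C_0(\G))$ by construction.
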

\begin{proof}
Let $(\tau_\alpha)$ be a contractive approximate identity in $L^1_\sharp(\G)$.
We know from \cite[pages~913--914]{kustermans-vaes:LCQG} that $L^1(\G)$ is a
two-sided ideal in $M(\G)$.  Thus we can set
$\omega_\alpha = \mu\conv \tau_\alpha$, and we have that $(\omega_\alpha)$ is
a net in $L^1(\G)$, bounded by $\|\mu\|$.  
As $\mu\in M_\sharp(\G)$ and 
$\tau_\alpha\in L^1_\sharp(\G)$, it is easy to see that $\omega_\alpha
\in L^1_\sharp(\G)$ with $\omega_\alpha^\sharp = \tau_\alpha^\sharp\conv \mu^\sharp$,
and so $\|\omega_\alpha^\sharp\| \leq \|\mu^\sharp\|$.

For $a\in C_0(\G)$ and $\lambda\in M(\G)$, both
$(\lambda\otimes\id)\Delta(a)$ and 
$(\id\otimes\lambda)\Delta(a)$ are members of $C_0(\G)$
(since $\lambda = \lambda'b$ for 
some $\lambda'\in M(\G)$, $b\in C_0(\G)$ and both $(b\ot 1)\cop(a)$
and $(1\ot b)\cop(a)$  are in $C_0(\G)\ot C_0(\G)$).
Hence for $a\in C_0(\G)$,
\[ \lim_\alpha \ip{\omega_\alpha}{a}
= \lim_\alpha \ip{\tau_\alpha}{(\mu\otimes\id)\Delta(a)}
= \ip{\epsilon}{(\mu\otimes\id)\Delta(a)}
= \ip{\mu}{a}, \]
as required.
\end{proof}

We do not see why the conclusions of this proposition require that $\G$ be
coamenable; however, a proof in the general case has eluded us.

\section{Bochner's theorem in the coamenable case}\label{sec:bochner}

The following is an analogue of Bochner's theorem for 
coamenable locally compact quantum groups.

\begin{theorem} \label{thm:bochner}
Suppose that $\G$ is a coamenable locally compact quantum group. 
Then $x\in \linfty(\G)$ is positive definite if and only if 
there is a positive functional $\hat\mu\in C_0^u(\dual\G)^*$ such that 
$x = (\id\ot\hat \mu)(\wW^*)$.
\end{theorem}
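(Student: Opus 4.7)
The ``if'' direction is Lemma~\ref{lem:fsp_implies_pdf}. For the converse, suppose $x\in\linfty(\G)$ is positive definite. The plan is to apply the GNS construction of Lemma~\ref{lem:GNS} to $x$, use the contractive approximate identity in $\lone_\sharp(\G)$ provided by Theorem~\ref{thm:bai-in-lone-sharp} to produce a cyclic vector in the GNS Hilbert space, and then extract $\hat\mu$ as a vector state via the universal property of $C_0^u(\dual\G)$.

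Concretely, let $(H,\Lambda,\pi)$ be the GNS data from Lemma~\ref{lem:GNS}, and let $(e_\delta)$ be the contractive approximate identity supplied by Theorem~\ref{thm:bai-in-lone-sharp}; by its construction both $(e_\delta)$ and $(e_\delta^\sharp)$ are approximate identities in $\lone(\G)$. The key estimate
\[
\|\Lambda(e_\delta)\|^2
= \ip{x^*}{e_\delta^\sharp\conv e_\delta}
\le \|x\|\,\|e_\delta^\sharp\|\,\|e_\delta\|
\le \|x\|
\]
shows that $(\Lambda(e_\delta))$ is bounded in $H$, so we may pass to a weak cluster point $\xi\in H$. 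Since $\|\Lambda(\omega)\|^2\le\|x\|\,\|\omega\|_\sharp^2$, the map $\Lambda$ is continuous on $\lone_\sharp(\G)$; combined with $\omega\conv e_\delta\to\omega$ in $\lone_\sharp(\G)$ this yields
\[
\pi(\omega)\xi
= \lim_\delta \pi(\omega)\Lambda(e_\delta)
= \lim_\delta \Lambda(\omega\conv e_\delta)
= \Lambda(\omega)
\qquad (\omega\in\lone_\sharp(\G)),
\]
so $\xi$ is cyclic for $\pi$. Using instead that $(e_\delta^\sharp)$ is a left approximate identity in $\lone(\G)$,
\[
\pp{\pi(\omega)\xi}{\xi}_H
= \pp{\Lambda(\omega)}{\xi}_H
= \lim_\delta \ip{x^*}{e_\delta^\sharp\conv\omega}
= \ip{x^*}{\omega}.
\]

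As $\pi$ is a non-degenerate $*$-representation of the Banach $*$-algebra $\lone_\sharp(\G)$, the universal property of $C_0^u(\dual\G)$ produces a non-degenerate $*$-representation $\tilde\pi\col C_0^u(\dual\G)\to\B(H)$ satisfying $\tilde\pi\circ\lambda_u=\pi$. Setting
\[
\hat\mu(a) := \pp{\tilde\pi(a)\xi}{\xi}_H \qquad (a\in C_0^u(\dual\G))
\]
defines a positive functional on $C_0^u(\dual\G)$, and $\hat\mu(\lambda_u(\omega)) = \ip{x^*}{\omega}$ for every $\omega\in\lone_\sharp(\G)$. Writing $x'=(\id\otimes\hat\mu)(\wW^*)$, the identity $(\omega^*\otimes\id)(\wW^*) = \lambda_u(\omega)^*$ together with the self-adjointness of $\hat\mu$ gives
\[
\ip{x'^*}{\omega}
= \conj{\hat\mu(\lambda_u(\omega)^*)}
= \hat\mu(\lambda_u(\omega))
= \ip{x^*}{\omega}
\]
for every $\omega\in\lone_\sharp(\G)$, and density of $\lone_\sharp(\G)$ in $\lone(\G)$ forces $x=x'$.

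The main obstacle, and the reason coamenability is essential, is the construction of the cyclic vector $\xi$: both the boundedness of $(\Lambda(e_\delta))$ and the identification $\ip{x^*}{\omega}=\pp{\pi(\omega)\xi}{\xi}_H$ depend on having an approximate identity whose $\sharp$-involute is again an approximate identity, which is exactly what Theorem~\ref{thm:bai-in-lone-sharp} furnishes. Without coamenability (Example~\ref{ex:not-cp}) no such $\hat\mu$ need exist, so the remaining steps---invoking the universal property of $C_0^u(\dual\G)$ and converting the vector state into the desired Fourier--Stieltjes transform---are routine by comparison.
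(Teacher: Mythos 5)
Your proposal is correct and follows essentially the same route as the paper: the GNS construction of Lemma~\ref{lem:GNS}, the contractive approximate identity from Theorem~\ref{thm:bai-in-lone-sharp} giving a weak cluster point $\xi$ with $\pi(\omega)\xi=\Lambda(\omega)$, and then $\hat\mu=\omega_{\xi,\xi}\circ\tilde\pi$ via the universal property of $C_0^u(\dual\G)$ (the paper equivalently defines $\hat\mu\circ\lambda_u=\omega_{\xi,\xi}\circ\pi$ directly). You in fact spell out a few steps the paper leaves implicit, such as why $(\Lambda(\omega)\mid\xi)=\ip{x^*}{\omega}$ using that $(e_\delta^\sharp)$ is again an approximate identity.
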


\begin{proof}
As we have already noted that the converse holds, we only need to show that 
a positive definite $x\in \linfty(\G)$ is of the form 
$(\id\ot\hat \mu)(\wW^*)$.

Let $(\Lambda, H, \pi)$ be as in Lemma~\ref{lem:GNS}, applied to $x$. 
By Theorem~\ref{thm:bai-in-lone-sharp},
there is a contractive approximate identity $(e_\alpha)$
in $\lone_\sharp(\G)$.  As $\|\Lambda(e_\alpha)\| \le \|x\|^{1/2}$ for
all $\alpha$, the net $\Lambda(e_\alpha)$ clusters weakly at some
$\xi\in H$.  Observe that then $\pi(\omega)\xi = \Lambda(\omega)$ 
for every $\omega\in \lone_\sharp(\G)$.

Define $\hat\mu\in C_0^u(\G)^*_+$ by 
$\hat\mu\circ\lambda_u = \omega_{\xi,\xi}\circ \pi$  
where $\lambda_u\col \lone_\sharp(\G)\to  C_0^u(\dual\G)$ 
is the universal representation defined by
$\lambda_u(\omega) = (\omega \ot\id)(\wW)$.
Then $\hat\mu$ is well-defined because $\lambda_u$ is injective
and bounded due to the universality of $\lambda_u$. 
Moreover, for every $\omega\in\lone_\sharp(\G)$,
\[
\ip{(\id\ot\hat \mu)(\wW^*)^*}{\omega} 
=\ip{\hat\mu}{\lambda_u(\omega)} = \pp{\pi(\omega)\xi}{\xi}
=\pp{\Lambda(\omega)}{\xi} = \ip{x^*}{\omega}.
\]
\end{proof}

\begin{remark}
An alternative (and much less direct) way to prove this result is to use
Corollary~\ref{cor:sqr_sum_coeffs}.  Indeed, as $\pi(\omega)\xi
= \Lambda(\omega)$, and by \cite{kustermans:universal} there is a unitary
corepresentation $U$ such that $\pi(\omega) = (\omega\otimes\id)(U)$,
it follows that $(\Lambda(\omega^\sharp)\mid f_i) = (\pi(\omega)^*\xi\mid f_i)
= (\xi\mid(\omega\otimes\id)(U)f_i) = \overline{\ip{a_i^*}{\omega}}$ with
$a_i^* = (\id\otimes\omega_{f_i,\xi})(U)$.  However, then
$\sum_i a_i^* a_i = \sum_i (\id\otimes\omega_{f_i,\xi})(U)
(\id\otimes\omega_{\xi,f_i})(U^*) = (\id\otimes\omega_{\xi,\xi})(UU^*)
= \|\xi\|^2<\infty$, as required to invoke Corollary~\ref{cor:sqr_sum_coeffs}.
\end{remark}

The following example shows that without coamenability,
positive definiteness is not enough for a Bochner type 
representation. 

\begin{example}  \label{ex:not-cp}
Let $\freegrp_2$ be the free group on two generators
and let $\G$ be the dual of $\freegrp_2$ 
(so that $\linfty(\G) = \vn(\freegrp_2)$, the group von Neumann
algebra, and $\lone(\G) = \lone_\sharp(\G) = \A(\freegrp_2)$, the
Fourier algebra).  There exists an infinite subset $E\subseteq\freegrp_2$
such that the map given by restriction of functions induces a surjection 
$\theta\col \A(\freegrp_2)\to \ell^2(E)$, see
\cite[equation~(2.1)]{leinert-german-paper}.  The adjoint 
$\theta^*\col\ell^2(E)\to \vn(\freegrp_2)$ is hence an isomorphism
onto its range.  Let $x'\in \ell^2(E)$ and set $x=\theta^*(x')$.  As
the involution on $\A(\freegrp_2)$ 
is pointwise conjugation of functions, it follows that $x$ is positive
definite if and only if $x'$ is pointwise non-negative.

For $a,b\in A(\mathbb \freegrp_2)$, we have that
\[ (\Lambda(a)\mid\Lambda(b))_H = \ip{x}{b^\sharp \star a}
= \ip{x'}{\theta(b^\sharp) \theta(a)}
= \sum_{s\in E} x'(s) a(s) \overline{b(s)}. \]
It follows that we have an isomorphism
\[ H\rightarrow \ell^2(E); \quad \Lambda(a) \mapsto
\big( a(s) x'(s)^{1/2} \big)_{s\in E}. \]
The action $\pi$ of $A(\mathbb \freegrp_2)$ on $H\cong\ell^2(E)$ is
just multiplication of functions.  It follows that the set of
coefficient functionals of $\pi$ is precisely $\ell^1(E)$, and we
can hence recover $x$ if and only if
$x'\in\ell^1(E)\subseteq\ell^2(E)$.  This immediately tells us
that the method of the proof of  Theorem~\ref{thm:bochner} fails for
any $x'\in \ell^2(E)\sm\ell^1(E)$.

Furthermore, we claim that there is no (positive) functional in 
$C_0^u(\dual\G)^* = \ell^1(\freegrp_2)$ which gives a Bochner type
representation for such $x$ (arising from $x'\in \ell^2(E)\sm\ell^1(E)$).
Indeed, suppose towards a contradiction that $\dual\mu\in\ell^1(\freegrp_2)$
is such that $x = (\id\otimes\hat\mu)(\wW^*)$ (i.e. that $x$ is the
``Fourier--Stieltjes transform of a positive measure'' $\dual\mu$).
In this setting $C_0^u(\dual\G) = C_0(\dual\G) = c_0(\freegrp_2)$ and
$\wW^* = W^* \in \mc B(\ell^2(\freegrp_2) \otimes \ell^2(\freegrp_2))$ is the
operator
\[ W^*(e_s \otimes e_t) = e_{ts} \otimes e_t, \]
where $(e_t)_{t\in\freegrp_2}$ is the canonical orthonormal basis of
$\ell^2(\freegrp_2)$.  Let $a = \omega_{\xi,\eta} \in A(\freegrp_2)$
where $\xi=(\xi_s),\eta=(\eta_t)$ are in $\ell^2(\freegrp_2)$.  Then
\begin{align*} \ip{x}{a} &= \bigpp{x(\xi)}{\eta} =
\sum_t \dual\mu_t \bigpp{W^*(\xi\otimes e_t)}{\eta\otimes e_t} \\
&= \sum_{t,s,r} \dual\mu_t \xi_s \overline{\eta_r}
\bigpp{W^*(e_s\otimes e_t)}{e_r\otimes e_t}
= \sum_{t,s} \dual\mu_t \xi_s \overline{\eta_{ts}}
= \sum_{t,r} \dual\mu_t \xi_{t^{-1}r} \overline{\eta_r}.
\end{align*}
On the other hand, 
if we view $A(\freegrp_2)$ as a space of functions on $\freegrp_2$ via the
embedding $A(\freegrp_2) \rightarrow c_0(\freegrp_2); \omega_{\xi,\eta}
\mapsto (\omega_{\xi,\eta}\otimes\id)(W)$ then that $x=\theta^*(x')$
means that 
\begin{align*} \ip{x}{a} &= \sum_{t\in E} x'(t)
\ip{ (\omega_{\xi,\eta}\otimes\id)(W) }{\omega_{e_t,e_t}} \\
&= \sum_{t\in E} \sum_{s\in\freegrp_2} x'(s) \xi_s \overline{\eta_{t^{-1}s}}
= \sum_{t\in E} \sum_{r\in\freegrp_2} x'(s) \xi_{tr} \overline{\eta_r}.
\end{align*}
If this is true for all $\xi,\eta$, it follows that
$\hat\mu_t = x'(t^{-1})$ for all $t^{-1}\in E$ and $\hat\mu_t=0$
otherwise, which is a contradiction, as $x'\in\ell^2(E)\setminus\ell^1(E)$.
(We remark that \cite{eymard} uses the embedding $A(\freegrp_2) \rightarrow
c_0(\freegrp_2); \omega_{\xi,\eta} \mapsto (\omega_{\xi,\eta}\otimes\id)(W^*)$
but this also leads to exactly the same contradiction.)
\end{example}

\section{$n$-positive multipliers} \label{sec:n-pos}

The main result of this section is a characterisation 
of  $n$-positive multipliers on coamenable locally compact quantum
groups. The proof relies on Bochner's theorem from the previous
section. 
The present section is inspired by the work of  De Canni\`ere and Haagerup 
\cite{de-canniere-haagerup:multipliers},
and the results are extensions of theirs concerning 
the commutative case when $\G = G$ is a locally compact group.

Recall that 
\[
\hat\lambda \col  C_0(\dual{\G})^* \to  M(C_0(\G)),
\qquad \hat\lambda(\dual\mu) = (\hat\mu\otimes\id)(\hat\ww)
= (\id \otimes \hat\mu)(\ww^*). \]
The one-sided universal analogue of this is 
\[ \hat\lambda^u \col  C_0^u(\dual{\G})^* \to  M(C_0(\G)),
\qquad \hat\lambda^u(\hat\mu) = (\id \otimes \hat\mu)(\wW^*).
\]

\begin{lemma} \label{lem:mult-measures}
Let $x\in  M(C_0(\G))$ be such that $x \hat\lambda(\hat\omega) \in
\hat\lambda^u\bigl(C_0^u(\dual\G)^*\bigr)$
for every $\hat\omega \in\lone(\dual\G)$. Then $x$ is a 
left multiplier of $\lone(\dual\G)$.
\end{lemma}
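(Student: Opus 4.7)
The plan is to lift the hypothesis to a bounded linear map $T\col \lone(\dual\G) \to C_0^u(\dual\G)^*$ and then show that $T$ factors through $\lone(\dual\G)$ itself, using that $\lone(\dual\G)$ sits inside $C_0^u(\dual\G)^*$ as a closed two-sided ideal together with the density of convolution products.

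First I would observe that $\hat\lambda^u$ is injective: the elements $\lambda_u(\omega) = (\omega\ot\id)(\wW)$ for $\omega\in\lone_\sharp(\G)$ are norm-dense in $C_0^u(\dual\G)$ by the defining universal property \cite{kustermans:universal}, so any $\hat\mu\in C_0^u(\dual\G)^*$ annihilating them all must vanish. Given this, the hypothesis supplies, for each $\hat\omega\in\lone(\dual\G)$, a \emph{unique} $T(\hat\omega)\in C_0^u(\dual\G)^*$ with $\hat\lambda^u(T(\hat\omega))=x\hat\lambda(\hat\omega)$, and the assignment $T$ is plainly linear. A closed graph argument of exactly the type used in Lemma~\ref{lem:L_x-bounded}---combining norm continuity of $\hat\lambda$ and $\hat\lambda^u$ with the injectivity just noted---then yields boundedness of $T$.

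Next I would exploit the algebraic structure. Both $\hat\lambda$ and $\hat\lambda^u$ are Banach algebra homomorphisms into $M(C_0(\G))$ when their domains carry the natural convolution products inherited from the coproducts, and $\hat\lambda=\hat\lambda^u\circ\pi_u^*$ where $\pi_u\col C_0^u(\dual\G)\to C_0(\dual\G)$ is the reducing morphism; in particular $\pi_u^*$ is an isometric Banach algebra embedding with closed range. For $\hat\omega_1,\hat\omega_2\in\lone(\dual\G)$, computing in two ways gives
\[
\hat\lambda^u\bigl(T(\hat\omega_1\conv\hat\omega_2)\bigr)
= x\hat\lambda(\hat\omega_1)\hat\lambda(\hat\omega_2)
= \hat\lambda^u\bigl(T(\hat\omega_1)\conv\pi_u^*(\hat\omega_2)\bigr),
\]
so by injectivity of $\hat\lambda^u$, $T(\hat\omega_1\conv\hat\omega_2)=T(\hat\omega_1)\conv\pi_u^*(\hat\omega_2)$. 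The key structural input is that $\lone(\dual\G)$, viewed inside $C_0^u(\dual\G)^*$ via the natural restriction composed with $\pi_u^*$, is a norm-closed two-sided ideal---a standard consequence of the constructions surrounding \cite{kustermans:universal}. Hence $T(\hat\omega_1)\conv\pi_u^*(\hat\omega_2)\in\pi_u^*(\lone(\dual\G))$, so $T$ carries convolution products into $\pi_u^*(\lone(\dual\G))$.

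To finish, I would invoke that convolution products are linearly norm-dense in $\lone(\dual\G)$ (by injectivity of $\hat\cop$ passed to preduals, or alternatively by applying Theorem~\ref{thm:prod-dense} to $\dual\G$ together with density of $\lone_\sharp(\dual\G)$ in $\lone(\dual\G)$). Since $T$ is bounded and $\pi_u^*(\lone(\dual\G))$ is closed, it follows that $T(\lone(\dual\G))\subseteq \pi_u^*(\lone(\dual\G))$; writing $T(\hat\omega)=\pi_u^*(\hat\sigma)$ gives $x\hat\lambda(\hat\omega)=\hat\lambda^u\pi_u^*(\hat\sigma)=\hat\lambda(\hat\sigma)\in\hat\lambda(\lone(\dual\G))$, which is the required multiplier property. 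The main obstacle I anticipate is convention management: verifying that $\hat\lambda$ and $\hat\lambda^u$ are homomorphisms (rather than anti-homomorphisms) for the specific conventions used in the paper (in particular the appearance of $\wW^*$ in the definition of $\hat\lambda^u$), and that the ideal property is consequently applied on the correct side; once these are nailed down, the argument is essentially closed graph plus density.
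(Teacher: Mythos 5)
Your proposal is correct and follows essentially the same route as the paper's proof: lift to a bounded map $\widetilde L_x\col\lone(\dual\G)\to C_0^u(\dual\G)^*$ via injectivity of $\hat\lambda^u$ and the closed graph theorem, use that $\lone(\dual\G)\circ\hat\pi$ is a closed ideal in $C_0^u(\dual\G)^*$ to handle convolution products, and then conclude by linear density of products together with boundedness. The only cosmetic difference is that you phrase the ideal step through the explicit identity $T(\hat\omega_1\conv\hat\omega_2)=T(\hat\omega_1)\conv\pi_u^*(\hat\omega_2)$, where the paper simply observes $x\hat\lambda(\hat\omega\conv\hat\tau)=(x\hat\lambda(\hat\omega))\hat\lambda(\hat\tau)$ directly.
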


\begin{proof}
Let $\hat\pi\col C_0^u(\dual \G)\to  C_0(\dual \G)$ denote the canonical
quotient map. Note that 
$\hat\lambda(\hat\omega) = \hat\lambda^u(\hat\omega\circ\hat\pi)$ 
for every $\hat\omega\in \lone(\dual\G)$.  
Define $\widetilde L_x\col \lone(\dual\G) \to  C_0^u(\dual\G)^*$ by 
\[
\hat\lambda^u(\widetilde L_x (\hat\omega)) =  x \hat\lambda(\hat\omega).
\]
Since $\hat\lambda^u$ is injective (because left slices of
$\wW^*$ are dense in $C_0^u(\dual{\G})$) $\widetilde L_x$ is 
well-defined.  The proof of Lemma~\ref{lem:L_x-bounded} also works
in this universal setting, and so $\widetilde L_x$ is bounded. 

By hypothesis, for $\hat\omega,\hat\tau\in \lone(\dual\G)$, 
\[
x\hat\lambda(\hat\omega \conv \hat\tau)
= \bigl(x\hat\lambda(\hat\omega)\bigr)\hat\lambda(\hat\tau)
\in \hat\lambda\bigl(\lone(\dual\G)\bigr)
\]
because $\lone(\dual\G)\circ\hat\pi$ is an ideal in $C_0^u(\dual\G)^*$,
see for example \cite[Proposition~8.3]{daws:mults}.
Since convolution products are linearly dense in $\lone(\dual\G)$, 
the above shows that for every $\hat\omega\in\lone(\dual\G)$ 
there is a sequence $(\hat\omega_n)\sub\lone(\dual\G)$ such that 
$\hat\omega_n\to \hat\omega$ in norm and 
$x\hat\lambda(\hat\omega_n) \in \hat\lambda\bigl(\lone(\dual\G)\bigr)$.
Since $\widetilde L_x$ is bounded, 
$\widetilde L_x(\hat\omega_n) \to \widetilde L_x(\hat\omega)$ 
in $C_0^u(\dual\G)^*$.  As $\dual\pi$ is a metric surjection,
$\lone(\dual\G)\circ\hat\pi$ is isometrically isomorphic to $\lone(\dual\G)$,
and so $\widetilde L_x(\hat\omega) \in \lone(\dual\G)\circ\hat\pi$.
Hence  $x \hat\lambda(\hat\omega)\in \hat\lambda\bigl(\lone(\dual\G)\bigr)$
as claimed.
\end{proof}

The following result extends Proposition 4.3
of \cite{de-canniere-haagerup:multipliers}.

\begin{propn} \label{propn:n-pos}
Suppose that $\G$ is coamenable.
The following are equivalent for $x\in  M(C_0(\G))$
\begin{enumerate}[(1)]
\item $x$ is an $n$-positive multiplier
\item \label{item:n-pos}
for every $(\alpha_i)_{i=1}^n\in\ltwo(\G)^n$ and 
$(\omega_i)_{i=1}^n\in\lone_\sharp(\G)^n$
\[
\Bigip{x^*}{\sum_{i,j=0}^n (\omega_j\conv\omega_i^\sharp) \cdot
                        \hat\lambda(\omega_{\alpha_j,\alpha_i})^*}\ge 0.
\]
Here $\cdot$ denotes the action of $L^\infty(\G)$ on $L^1(\G)$.
\end{enumerate}
\end{propn}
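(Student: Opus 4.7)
The strategy is to reinterpret condition~(\ref{item:n-pos}) as the $n$-positivity of $\Phi := L_x^*$ evaluated on elements $a_i a_j^*$, where $a_i = \lambda(\omega_i)$ and $\lambda\col\lone_\sharp(\G)\to C_0(\dual\G)\sub\linfty(\dual\G)$, $\omega\mapsto(\omega\ot\id)(W)$, is the natural $*$-homomorphism with $\lambda(\omega)^*=\lambda(\omega^\sharp)$ and $\lambda(\omega\conv\tau)=\lambda(\omega)\lambda(\tau)$. With this dictionary in hand, the forward direction becomes a direct computation, while the backward direction splits into a multiplier-existence argument (invoking Bochner's theorem, which is where coamenability enters) followed by a density extension.

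For (1)$\Rightarrow$(2), put $a_i = \lambda(\omega_i)$, so that $a_i a_j^* = \lambda(\omega_i\conv\omega_j^\sharp)$. The matrix $[a_i a_j^*]_{ij} \in M_n(\linfty(\dual\G))$ is positive (it equals $AA^*$ for the column $A=(a_1,\dots,a_n)^\top$), so $[\Phi(a_i a_j^*)] \ge 0$ by $n$-positivity of $\Phi$. Pairing with $(\alpha_i)\in\ltwo(\G)^n$ gives
\[
\sum_{i,j=1}^n \bigpp{\Phi(a_i a_j^*)\alpha_j}{\alpha_i} \ge 0,
\]
which is manifestly real (the quadratic form of a positive matrix applied to a vector). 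Unfolding $\Phi=L_x^*$ via the defining identity $\hat\lambda(L_x(\hat\omega))=x\hat\lambda(\hat\omega)$, combined with $\hat\lambda(\hat\omega)^*=(\id\ot\hat\omega^*)(W)$ and the $*$-homomorphism property of $\lambda$, identifies each summand with the complex conjugate of $\ip{x^*}{(\omega_j\conv\omega_i^\sharp)\cdot\hat\lambda(\omega_{\alpha_j,\alpha_i})^*}$, and~(\ref{item:n-pos}) follows.

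For (2)$\Rightarrow$(1), I first establish that $x$ is a left multiplier. Setting $\omega_i = 0 = \alpha_i$ for $i\ge 2$ collapses~(\ref{item:n-pos}) to the $n=1$ instance
\[
\ip{x^*}{(\omega\conv\omega^\sharp)\cdot\hat\lambda(\omega_{\alpha,\alpha})^*} \ge 0,
\]
valid for every $\omega\in\lone_\sharp(\G)$ and $\alpha\in\ltwo(\G)$. Interpreting $\cdot$ as the right action $(\omega\cdot y)(z) = \omega(yz)$, this rewrites as $\ip{u_\alpha^*}{\omega\conv\omega^\sharp} \ge 0$, where $u_\alpha := x\hat\lambda(\omega_{\alpha,\alpha})\in\linfty(\G)$; that is, $u_\alpha$ is positive definite in the sense of Definition~(\ref{defn:pdf}). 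By coamenability, Bochner's theorem (Theorem~\ref{thm:bochner}) produces $\hat\mu_\alpha\in C_0^u(\dual\G)^*_+$ with $u_\alpha = \hat\lambda^u(\hat\mu_\alpha)$. Polarizing in the vector arguments, together with the fact that every $\hat\omega\in\lone(\dual\G)$ has the form $\omega_{\xi,\eta}$ (standard form of $\linfty(\dual\G)$ on $\ltwo(\G)$), produces $\hat\mu_{\hat\omega}\in C_0^u(\dual\G)^*$ with $x\hat\lambda(\hat\omega) = \hat\lambda^u(\hat\mu_{\hat\omega})$. Lemma~\ref{lem:mult-measures} now gives that $x$ is a left multiplier of $\lone(\dual\G)$.

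With $\Phi := L_x^*$ in hand, the forward computation reverses, so~(\ref{item:n-pos}) translates into $\sum_{ij}\pp{\Phi(a_i a_j^*)\alpha_j}{\alpha_i} \ge 0$ for all $a_i\in\lambda(\lone_\sharp(\G))$ and $\alpha_i\in\ltwo(\G)$. Since $\lambda(\lone_\sharp(\G))$ is $\sigma$-strong$^*$-dense in $\linfty(\dual\G)$ with norm control (Kaplansky density) and $\Phi$ is normal, joint $\sigma$-strong$^*$-continuity of multiplication on bounded sets propagates the inequality to all $a_i\in\linfty(\dual\G)$, which is precisely the $n$-positivity of $\Phi$. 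The main obstacle is the first step of the backward direction: turning the $n=1$ inequality into positive definiteness of $x\hat\lambda(\omega_{\alpha,\alpha})$ so that Bochner's theorem can be applied; this is the sole place where coamenability is invoked.
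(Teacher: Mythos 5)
Your proposal is correct and takes essentially the same route as the paper's proof: the forward direction rests on the positivity of the matrices $[\lambda(\omega_i\conv\omega_j^\sharp)]$ together with $n$-positivity of $L_x^*$, and the converse uses the $n=1$ instance plus Theorem~\ref{thm:bochner} and Lemma~\ref{lem:mult-measures} to establish that $x$ is a left multiplier (the only place coamenability enters), followed by a density argument to upgrade to $n$-positivity. The only cosmetic difference is that the paper passes through weak$^*$-approximation by $M_n(C_0(\dual\G))_+$ and Paulsen's decomposition of positive matrices where you invoke Kaplansky density and normality of $\Phi$; these are interchangeable.
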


\begin{proof}
Suppose that $x$ is an $n$-positive multiplier. 
Let $(\omega_i)_{i=1}^n\in\lone_\sharp(\G)^n$, and note that
$[\lambda(\omega_i\conv \omega_j^\sharp)]\ge 0$ in $
M_n(C_0(\dual\G)) \subseteq \B(\ltwo(\G)^n)$. 
Since $L_x^*$ is $n$-positive, we have for every
$(\alpha_i)_{i=1}^n\in\ltwo(\G)^n$ 
\begin{align*}
0&\le \sum_{i,j=1}^n 
\pp{L_x^*((\lambda(\omega_i\conv\omega_j^{\sharp})))\alpha_j}{\alpha_i}
=\sum_{i,j=1}^n
\ip{\lambda(\omega_i\conv\omega_j^{\sharp})}{L_x(\omega_{\alpha_j,\alpha_i})}\\
&=\sum_{i,j=1}^n\bigl((\omega_i\conv\omega_j^{\sharp})\ot
                      L_x(\omega_{\alpha_j,\alpha_i})\bigr)(W) 
=\sum_{i,j=1}^n\bigl((\omega_i\conv\omega_j^{\sharp})^{\sharp *}\ot
                      L_x(\omega_{\alpha_j,\alpha_i})\bigr)(W^*) \\
&=\sum_{i,j=1}^n\ip{\hat\lambda\bigl(L_x(\omega_{\alpha_j,\alpha_i})\bigr)}%
               {(\omega_j\conv\omega_i^{\sharp})^{*}}
= \sum_{i,j=1}^n \ip{ x \hat\lambda\bigl(\omega_{\alpha_j,\alpha_i}\bigr)}%
               {(\omega_j\conv\omega_i^{\sharp})^{*}}
\\
&= \conj{\Bigip{x^*}{\sum_{i,j=0}^n 
    (\omega_j\conv\omega_i^\sharp) \cdot
   \hat\lambda(\omega_{\alpha_j,\alpha_i})^*}},
\end{align*}
where we used the fact that 
$S\bigl((\id\ot\tau)(W)) = (\id\ot\tau)(W^*)$ for
$\tau\in\B(\ltwo(\G))_*$. 
The calculation shows that (\ref{item:n-pos}) holds.

Conversely, suppose that (\ref{item:n-pos}) holds.
For every $x \in  M_n(\linfty(\dual\G))_+$, there is 
a net $(a_\alpha)\in  M_n(C_0(\dual\G))_+$ converging weak* to $x$,
and by \cite[Lemma~3.13]{paulsen:cb-maps-and-algebras},
every $a_\alpha$ is a sum of $n$ matrices of the form 
$[b_i b_j^*]_{i,j=1}^n$ with $(b_i)_{i=1}^n\in  C_0(\dual\G)^n$.  
Hence the density of $\lone_\sharp(\G)$ in $\lone(\G)$
implies that the linear span of matrices of the form 
$[\lambda(\omega_i\conv \omega_j^\sharp)]$ with
$(\omega_i)_{i=1}^n\in\lone_\sharp(\G)^n$ is weak*-dense in 
$ M_n(\linfty(\dual\G))_+$. Therefore the calculation in the first
part of the proof  shows that $L_x$ is $n$-positive, assuming 
that $x$ is a left multiplier of $\lone(\dual\G)$. 
We shall show that $x$ is indeed a multiplier by applying
Lemma~\ref{lem:mult-measures}.
For $\alpha\in\ltwo(\G)$, write $\omega_{\alpha} = \omega_{\alpha,\alpha}$. 
Since $x$ is $1$-positive, each  
$x \hat\lambda(\omega_{\alpha})$ is positive definite.
Hence, by Theorem~\ref{thm:bochner}, 
$x \hat\lambda(\omega_{\alpha}) \in \hat\lambda_u(C_0^u(\dual\G)^*)$.
But since $\lone(\dual\G)$ is spanned by elements of the 
form $\omega_\alpha$, an application of Lemma~\ref{lem:mult-measures}
implies that $x$ is a left multiplier of $\lone(\dual\G)$. 
\end{proof}

The following result extends 
Corollary~4.4 of \cite{de-canniere-haagerup:multipliers}.

\begin{propn}\label{prop:npos_implies_pd}
Suppose that $\dual\G$ is coamenable. Every $n$-positive 
multiplier $x\in M(C_0(\G))$ is positive definite. 
\end{propn}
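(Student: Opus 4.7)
The plan is to reduce to the case $n=1$ (since an $n$-positive multiplier is automatically $1$-positive) and then to exploit coamenability of $\dual\G$ to verify that $\ip{x^*}{\omega\conv\omega^\sharp}\ge 0$ for every $\omega\in\lone_\sharp(\G)$.

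The first step is to establish, for arbitrary $\hat\tau\in\lone(\dual\G)$ rather than merely the vector functionals $\omega_\alpha$, the identity
\[
\ip{\lambda(\omega\conv\omega^\sharp)}{L_x(\hat\tau)}
=\conj{\ip{x^*}{(\omega\conv\omega^\sharp)\cdot\hat\lambda(\hat\tau)^*}}.
\]
The derivation is a verbatim adaptation of the computation in Proposition~\ref{propn:n-pos}: it uses only that $\omega\conv\omega^\sharp\in\lone_\sharp(\G)$ is fixed by $\sharp$, together with the defining relation $\hat\lambda(L_x(\hat\tau))=x\hat\lambda(\hat\tau)$. Since $\lambda$ is a $*$-homomorphism on $\lone_\sharp(\G)$, we have $\lambda(\omega\conv\omega^\sharp)=\lambda(\omega)\lambda(\omega)^*\ge 0$ in $\linfty(\dual\G)$; combined with positivity of $L_x^*$, this makes the left-hand side nonnegative for every $\hat\tau\in\lone(\dual\G)_+$, so the real pairing on the right is nonnegative.

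Next I would invoke coamenability of $\dual\G$ to produce a suitable net. The counit $\hat\epsilon$ of $\dual\G$ extends to a state on $C_0(\dual\G)$ satisfying $\hat\lambda(\hat\epsilon)=(\id\ot\hat\epsilon)(W^*)=1$. By Hahn--Banach together with Goldstine (preserving positivity) there is a net $(\hat\tau_\gamma)$ in $\lone(\dual\G)_+$ with $\|\hat\tau_\gamma\|\le 1$ converging weak$^*$ to $\hat\epsilon$ in $C_0(\dual\G)^*$. Since $(\omega\ot\id)(W^*)=\lambda(\omega^*)^*\in C_0(\dual\G)$ for every $\omega\in\lone(\G)$, unfolding gives $\ip{\hat\lambda(\hat\tau_\gamma)}{\omega}=\hat\tau_\gamma((\omega\ot\id)(W^*))\to\hat\epsilon((\omega\ot\id)(W^*))=\omega(1)$, i.e.\ $\hat\lambda(\hat\tau_\gamma)\to 1$ weak$^*$ in $\linfty(\G)$; by weak$^*$-continuity of the adjoint, also $\hat\lambda(\hat\tau_\gamma)^*\to 1$ weak$^*$.

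Finally, the functional $f\mapsto\ip{x^*}{(\omega\conv\omega^\sharp)\cdot f}=(\omega\conv\omega^\sharp)(fx^*)$ on $\linfty(\G)$ is weak$^*$-continuous (being pairing against the element of $\lone(\G)$ defined by $f\mapsto(\omega\conv\omega^\sharp)(fx^*)$). Substituting $f=\hat\lambda(\hat\tau_\gamma)^*$ and passing to the limit exhibits $\ip{x^*}{\omega\conv\omega^\sharp}$ as the limit of nonnegative reals, which gives the desired inequality. The main technical subtlety is justifying the antipode manipulation in the first-step identity for general $\hat\tau$, but this is exactly the same manoeuvre as in Proposition~\ref{propn:n-pos} and so requires no new ideas.
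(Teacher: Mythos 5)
Your argument is correct and takes essentially the same route as the paper's proof: reduce to $1$-positivity, run the computation of Proposition~\ref{propn:n-pos} to see that $x\hat\lambda(\hat\tau)$ is positive definite for each positive normal $\hat\tau$, and pass to a weak$^*$ limit along a net with $\hat\lambda(\hat\tau_\gamma)\to 1$. The only (cosmetic) difference is how coamenability of $\dual\G$ produces that net: the paper uses almost-invariant vectors $(\alpha_i)$ from \cite[Theorem~3.1]{bedos-tuset:amen-coamen} and takes $\hat\tau_\gamma=\omega_{\alpha_i,\alpha_i}$ (which, since $\linfty(\dual\G)$ is in standard position, already covers your ``general'' positive normal $\hat\tau$), whereas you approximate the counit of $\dual\G$ by normal states.
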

\begin{proof}
Since $x$ is, in particular, a $1$-positive multiplier, 
\[
\Bigip{x^*}{(\omega\conv\omega^\sharp)\cdot\hat\lambda(\omega_{\alpha})^*}\ge 0 
\]
for every $\alpha\in\ltwo(\G)$ and $\omega\in\lone_\sharp(\G)$
(by Proposition~\ref{propn:n-pos}; this part 
does not rely on coamenability).
That is, $x\hat\lambda(\omega_{\alpha})$ is positive definite.
By coamenability of $\dual\G$, there exists a 
net $(\alpha_i)$ of unit vectors in $\ltwo(\G)$ 
such that 
\[
\|W^*(\xi\ot \alpha_i) - (\xi\ot \alpha_i)\|\to 0
\]
for every $\xi\in\ltwo(\G)$ 
(\cite[Theorem~3.1]{bedos-tuset:amen-coamen}, recall that 
$\dual W = \sigma W^*\sigma$). 
For every $\xi, \eta\in \ltwo(\G)$, we have
\[
|\ip{\hat\lambda(\omega_{\alpha_i})}{\omega_{\xi,\eta}} - \ip{1}{\omega_{\xi,\eta}}|
= |\pp{W^*(\xi\ot{\alpha_i})- (\xi\ot{\alpha_i})}{\eta\ot{\alpha_i}}|,
\]
and so $\hat\lambda(\omega_{\alpha_i})\to 1$ weak* in $\linfty(\G)$. 
Since $x\hat\lambda(\omega_{\alpha_i})$ is positive definite and 
positive definiteness is preserved by weak* limits,
it follows that $x$ is positive definite.
\end{proof}

\begin{cor}
Suppose that both $\G$ and $\dual\G$ are coamenable. 
Then the following are equivalent for $x\in M(C_0(\G))$.
\begin{enumerate}[(1)]
\item $x$ is positive definite.
\item $x$ is an $n$-positive multiplier for some natural number $n$.
\item $x$ is completely positive definite.
\end{enumerate}
\end{cor}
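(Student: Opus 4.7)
The plan is to close the cycle $(1)\Rightarrow(3)\Rightarrow(2)\Rightarrow(1)$ by assembling results already proved in the paper; no new argument is required, only the observation that the two coamenability hypotheses together activate the one-directional results of the last two sections in both directions.

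For $(1)\Rightarrow(3)$ I would invoke Theorem~\ref{thm:bochner}: coamenability of $\G$ promotes a positive definite $x$ to a Fourier--Stieltjes transform of a positive measure, i.e.\ $x=(\id\ot\hat\mu)(\wW^*)$ for some $\hat\mu\in C_0^u(\dual\G)^*_+$. Then the equivalence of conditions (\ref{defn:fsp}), (\ref{defn:cpm}) and (\ref{defn:cpdf}) established in \cite{daws:cp-multipliers} together with the main theorem of Section~\ref{sec:cpd} gives that $x$ is completely positive definite.

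For $(3)\Rightarrow(2)$ I would again use the main theorem of Section~\ref{sec:cpd}: a completely positive definite $x$ is a completely positive multiplier, so $L_x^*$ is completely positive, and in particular $n$-positive for every $n\in\nat$; thus (2) holds, say with $n=1$. Finally $(2)\Rightarrow(1)$ is exactly Proposition~\ref{prop:npos_implies_pd}, which uses coamenability of $\dual\G$ to upgrade $n$-positivity of $L_x^*$ to positive definiteness of $x$.

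There is no real obstacle: the substantive work lies in Theorem~\ref{thm:bochner} (the coamenable Bochner theorem, requiring $\G$ coamenable) and in Proposition~\ref{prop:npos_implies_pd} (requiring $\dual\G$ coamenable), and the present corollary is the clean symmetric statement obtained when both hypotheses are imposed at once.
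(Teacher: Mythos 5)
Your proposal is correct and matches the paper's (implicit) proof: the corollary carries no written proof precisely because it is the assembly of Theorem~\ref{thm:bochner} (using coamenability of $\G$ for $(1)\Rightarrow(3)$ via the Fourier--Stieltjes representation and the implications from \cite{daws:cp-multipliers}), the main theorem of Section~\ref{sec:cpd} for $(3)\Rightarrow(2)$, and Proposition~\ref{prop:npos_implies_pd} (using coamenability of $\dual\G$) for $(2)\Rightarrow(1)$. No gaps.
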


Even without the assumptions on coamenability, 
a completely positive definite function is always 
positive definite and an $n$-positive multiplier. 
However, we have the following counterexamples:
\begin{itemize}
\item 
 Example~\ref{ex:not-cp} shows that, in general, a positive define
 function need not be an $n$-positive multiplier for every $n$. 
 In this example $\G$ is not coamenable but $\dual\G$ is. 
\item 
  De Canni\`ere and Haagerup showed 
  in \cite[Corollary~4.8]{de-canniere-haagerup:multipliers},
  that if $\G$ is the free group $\mathbb F_N$ on $N\geq 2$
  generators and $n\geq 1$, then there exist $n$-positive multipliers
  of $L^1(\dual\G)$ which are not positive definite.  
  In this example $\G$ is coamenable but $\dual\G$ is not. 
\end{itemize}

\subsection*{Acknowledgments}
We thank the anonymous referee for helpful comments which in particular
helped clarify Example~\ref{ex:not-cp}.
The first named author was partly supported by the EPSRC grant EP/I026819/1.
The second named author was partly supported by 
the Emil Aaltonen Foundation.


\begin{thebibliography}{10}

\bibitem{bedos-tuset:amen-coamen}
E.~B{\'e}dos, L.~Tuset, \emph{Amenability and co-amenability for locally
  compact quantum groups}, Internat. J. Math. \textbf{14} (2003), 865--884.

\bibitem{blecher-smith}
D.\,P. Blecher, R.\,R. Smith,
\emph{The dual of the {H}aagerup tensor product},
J. London Math. Soc. \textbf{45} (1992) 126--144.

\bibitem{brannan:approx}
M. Brannan, 
\emph{Approximation properties for free orthogonal and free unitary
  quantum groups}, J. Reine Angew. Math.
  \textbf{672} (2012) 223--251.

\bibitem{brannan:quan_auts}
M. Brannan, \emph{Reduced operator algebras of trace-preserving
  quantum automorphism groups}, preprint, arXiv:1202.5020.

\bibitem{brannan-daws-samei:cb-repn}
M.~Brannan, M.~Daws, E.~Samei, \emph{Completely bounded representations of
  convolution algebras of locally compact quantum groups}, preprint,
  arXiv:1107.2094. 

\bibitem{dales:banach-algebras}
H.~G. Dales, \emph{Banach algebras and automatic continuity}, The Clarendon
  Press, Oxford University Press, New York, 2000.

\bibitem{daws:cp-multipliers}
M.~Daws, \emph{Completely positive multipliers of quantum groups},
   to appear \emph{Internat. J. Math.}, see arXiv:1107.5244.

\bibitem{daws:mults}
M.~Daws, \emph{Multipliers, self-induced and dual Banach algebras},
Dissertationes Math. (Rozprawy Mat.) \textbf{470} (2010) 62 pp. 

\bibitem{day:amenable}
M.~M. Day, \emph{Amenable semigroups}, Illinois J. Math. 1 (1957)
  509--544.

\bibitem{de-canniere-haagerup:multipliers}
J.~De~Canni{\`e}re, U.~Haagerup, \emph{Multipliers of the {F}ourier algebras
  of some simple {L}ie groups and their discrete subgroups}, Amer. J. Math.
  \textbf{107} (1985), 455--500.

\bibitem{dixmier:C*-algebras}
J.~Dixmier, \emph{{$C\sp*$}-algebras}, North-Holland Publishing Co., Amsterdam,
  1977, Translated from the French by Francis Jellett, North-Holland
  Mathematical Library, Vol. 15.

\bibitem{effros-ruan}
E.\,G. Effros, Z.-J. Ruan,
\emph{Operator space tensor products and {H}opf convolution algebras},
J. Operator Theory \textbf{50} (2003) 131--156.

\bibitem{enock-schwartz:kac}
M.~Enock, J.-M. Schwartz, \emph{Kac algebras and duality of locally compact
  groups}, Springer-Verlag, Berlin, 1992.

\bibitem{eymard}
P.~Eymard, \emph{L'alg\`ebre de {F}ourier d'un groupe localement compact},
   Bull. Soc. Math. France \textbf{92} (1964) 181--236.

\bibitem{hu-neufang-ruan:Multsetc}
Z.~Hu, M.~Neufang, Z.-J.~Ruan,
   \emph{Multipliers on a new class of {B}anach algebras, locally
              compact quantum groups, and topological centres},
  Proc. Lond. Math. Soc. \textbf{100} (2010) 429--458.

\bibitem{junge-neufang-ruan:mults}
M.~Junge, M.~Neufang, Z.-J.~Ruan,
  \emph{A representation theorem for locally compact quantum groups},
  Internat. J. Math. \textbf{20} (2009) 377--400.

\bibitem{kalantar-neufang-ruan}
M. Kalantar, M. Neufang, Z.-J. Ruan,
\emph{Poisson boundaries over locally compact quantum groups},
preprint, arXiv:1111.5828.

\bibitem{kraus-ruan:multipliers-of-kac}
J. Kraus, Z.-J. Ruan, \emph{Multipliers of Kac algebras},
Internat. J. Math. \textbf{8} (1997) 213--248. 

\bibitem{kustermans:one-parameter}
J.~Kustermans, \emph{One-parameter representations on {C}*-algebras}, 
 preprint, arXiv:funct-an/9707009.

\bibitem{kustermans:universal}
J.~Kustermans, \emph{Locally compact quantum groups in the universal setting},
  Internat. J. Math. \textbf{12} (2001) 289--338.

\bibitem{kustermans:LCQG-chapter}
J.~Kustermans, \emph{Locally compact quantum groups}, Quantum
independent increment processes. {I}, Lecture Notes in Math.,
vol. 1865, Springer, Berlin, 2005, pp.~99--180.

\bibitem{kustermans-vaes:LCQG_VN}
J.~Kustermans, S.~Vaes, \emph{Locally compact quantum groups in the von
   {N}eumann algebraic setting},
   Math. Scand. \textbf{92} (2003) 68--92.

\bibitem{kustermans-vaes:LCQG}
J.~Kustermans, S.~Vaes, \emph{Locally compact quantum groups}, Ann. Sci.
  {\'E}cole Norm. Sup. (4) \textbf{33} (2000) 837--934.

\bibitem{kyed:prop-T}
D. Kyed, \emph{A cohomological description of property (T) for quantum groups},
 J. Funct. Anal. \textbf{261} (2011) 1469--1493. 

\bibitem{kyed-soltan}
D. Kyed, P. So\l tan, 
   \emph{Property (T) and exotic quantum group norms},
   J. Noncommut. Geom. \textbf{6} (2012) 773--800.

\bibitem{leinert-german-paper}
M.~Leinert, \emph{Faltungsoperatoren auf gewissen diskreten {G}ruppen},
  Studia Math. \textbf{52} (1974) 149--158.

\bibitem{masuda-nakagami-woronowicz}
  T. Masuda, Y. Nakagami, S.\,L. Woronowicz,
  \emph{A {$C\sp \ast$}-algebraic framework for quantum groups},
  Internat. J. Math. \textbf{14} (2003) 903--1001.

\bibitem{paulsen:cb-maps-and-algebras}
V.~Paulsen, \emph{Completely bounded maps and operator algebras},
Cambridge University Press, Cambridge, 2002.

\bibitem{reed-simon:vol2}
M.~Reed, B.~Simon,
\emph{Methods of modern mathematical physics. II. Fourier analysis,
  self-adjointness}, Academic Press, New York-London, 1975.

\bibitem{runde:UCB}
V.~Runde, \emph{Uniform continuity over locally compact quantum groups},
  J. Lond. Math. Soc. \textbf{80} (2009) 55--71.

\bibitem{salmi:LUC}
P.~Salmi, \emph{Quantum semigroup compactifications and uniform continuity on
  locally compact quantum groups}, Illinois J. Math.
\textbf{54} (2010) 469--483.


\bibitem{van-daele:lcqg_vn_approach}
A.~Van~Daele, \emph{Locally compact quantum groups. A von Neumann
  algebra approach}, preprint, arXiv:math/0602212.

\end{thebibliography}
\end{document}